\global\let\AddToReset=\@addtoreset}
\newtheorem{lemma}{\bf Lemma}[section]
\newtheorem{property}{Property}[section]
\newtheorem{@definition}{\sc Definition}[section]
\newtheorem{@remark}{\sc Remark}[section]
\newtheorem{@example}{\sc Example}[section]
\newcommand{\beqn}{\begin{displaymath}}
\newcommand{\eeqn}{\end{displaymath}}
\newcommand{\beq}{\begin{equation}}  
\newcommand{\eeq}{\end{equation}}
\def\mathsf{\bf}
\def\N{\mathbb{N}}
\def\R{\mathbb{R}}
\def\Z{\mathbb{Z}}
\def\E{\mathrm E}
\def\text{\mbox}
\def\1{{\bf 1}}
\newcommand{\Cov}{\mbox{Cov}}
\newcommand{\Loi}{\mathcal{L}}
\def\limiteloiN{\renewcommand{\arraystretch}{0.5}
\begin{array}[t]{c}
\stackrel{{\Loi}}{\longrightarrow} \\
{\scriptstyle N\rightarrow\infty}
\end{array}\renewcommand{\arraystretch}{1}}
\def\limiteloiNm{\renewcommand{\arraystretch}{0.5}
\begin{array}[t]{c}
\stackrel{{\Loi}}{\longrightarrow} \\
{\scriptstyle [N/m]\wedge m \rightarrow\infty}
\end{array}\renewcommand{\arraystretch}{1}}
\def\limiteprobaN{\renewcommand{\arraystretch}{0.5}
\begin{array}[t]{c}
\stackrel{{\cal P}}{\longrightarrow} \\
{\scriptstyle N \rightarrow\infty}
\end{array}\renewcommand{\arraystretch}{1}}
\def\limiteN{\renewcommand{\arraystretch}{0.5}
\begin{array}[t]{c}
\stackrel{}{\longrightarrow} \\
{\scriptstyle N\rightarrow\infty}
\end{array}\renewcommand{\arraystretch}{1}}
\def\limitet{\renewcommand{\arraystretch}{0.5}
\begin{array}[t]{c}
\stackrel{}{\longrightarrow} \\
{\scriptstyle t\rightarrow\infty}
\end{array}\renewcommand{\arraystretch}{1}}
\def\limitet0{\renewcommand{\arraystretch}{0.5}
\begin{array}[t]{c}
\stackrel{}{\longrightarrow} \\
{\scriptstyle t\rightarrow 0}
\end{array}\renewcommand{\arraystretch}{1}}
\newtheorem{thm}{Theorem}
\newtheorem{rem}{Remark}
\newtheorem{prop}{Proposition}
\newtheorem{popy}{Property}
\def\Cov{\mathrm{Cov}}
\begin{document}

\title{\bf Semiparametric stationarity and fractional unit roots tests based on data-driven multidimensional increment ratio statistics}

\author{\centerline{Jean-Marc Bardet and B\'echir Dola} \\
\small {\tt bardet@univ-paris1.fr},~~ \small {\tt bechir.dola@univ-paris1.fr}\\
~\\
SAMM, Universit\'e Panth\'eon-Sorbonne (Paris I), 90 rue de Tolbiac, 75013 Paris, FRANCE}

\maketitle
\begin{abstract}
In this paper, we show that the central limit theorem (CLT) satisfied by the data-driven Multidimensional Increment Ratio (MIR) estimator of the memory parameter $d$ established in Bardet and Dola (2012) for  $d \in (-0.5,0.5)$ can be extended to a semiparametric class of Gaussian fractionally integrated  processes with memory parameter $d \in (-0.5,1.25)$. Since the asymptotic variance of this CLT can be estimated, by data-driven MIR tests for the two cases of stationarity and  non-stationarity, so two tests are constructed distinguishing the hypothesis $d<0.5$ and $d \geq 0.5$, as well as a fractional unit roots test distinguishing the case $d=1$ from the case $d<1$. Simulations done on numerous kinds of short-memory, long-memory and non-stationary processes, show both the high accuracy and robustness of this MIR estimator compared to those of usual semiparametric estimators. They also attest of the reasonable efficiency of MIR tests compared  to other usual stationarity tests or fractional unit roots tests.
\end{abstract}
\begin{quote}
{\em Keywords:} {\small  Gaussian fractionally integrated processes; semiparametric estimators of the memory parameter; test of long-memory; stationarity test; fractional unit roots test.}
\end{quote}

\vskip1cm

\section{Introduction}
The set $I(d)$ of fractionally integrated stochastic process $X=(X_k)_{k\in \Z}$ was defined and used in many articles (see for instance, Granger and Joyeux, 1980). Here we consider the following spectral version of this set for $-0.5 < d<1.5$:\\
~\\
{\bf Set $I(d)$:} {\em $X=(X_t)_{t\in \Z}$  is a stochastic process and there exists a continuous function $f^*:[-\pi,\pi] \to [0,\infty[$ satisfying:}
\begin{enumerate}
\item  {\em  if $-0.5< d<0.5$,  $X$ is a stationary process with a spectral density $f$ satisfying
$f(\lambda) = |\lambda|^{-2d}f^*(\lambda)$ for all $\lambda \in (-\pi,0)\cup (0,\pi)$, with $f^*(0)>0$}.
\item  {\em  if $0.5\leq d<1.5$, $U=(U_t)_{t\in \Z}=(X_{t}-X_{t-1})_{t\in \Z}$ is a stationary process with a spectral density $f$ satisfying $
f(\lambda) =  |\lambda|^{2-2d}f^*(\lambda)$ for all $\lambda \in (-\pi,0)\cup (0,\pi)$, with $f^*(0)>0$}.
\end{enumerate}
The case $d\in (0,0.5)$ is the case of long-memory processes, while $-0.5< d\leq 0$ corresponds to short-memory processes while $0.5 \leq d< 1.5$ corresponds to non-stationary processes having stationary increments. ARFIMA$(p,d,q)$ processes (which are linear processes), as well fractional Gaussian noises (with parameter $H=d+1/2\in (0,1)$) or fractional Brownian motions (with parameter $H=d-1/2\in (0,1)$) are famous examples of processes satisfying Assumption $I(d)$. The purpose of this paper is twofold: firstly, we establish the consistency of an adaptive data-driven  semiparametric estimator of $d$ for any $d\in (-0.5,1.25)$. Secondly, we use this estimator to build new stationarity and fractional unit roots semiparametric tests.  \\
~\\
Numerous articles have been devoted to the estimation of  $d$ in the case $d\in (-0.5,0.5)$ only. The books of Beran (1994) and Doukhan {\it et al.} (2003) provide large surveys of such parametric estimators (as maximum likelihood or Whittle estimators) or semiparametric estimators (as local Whittle, log-periodogram or wavelet based estimators). Here we will focus on the case of semiparametric estimators of processes satisfying Assumption $I(d)$. Even if first versions of local Whittle, log-periodogram and wavelet based estimators are considered in the case $d<0.5$ only (see for instance Robinson, 1995a and 1995b, Veitch {\it et al.}, 2003) , new extensions have been provided to estimate $d$ when $d\geq 0.5$ also (see for instance Hurvich and Ray, 1995, Velasco, 1999a,  Velasco and Robinson, 2000, Moulines and Soulier, 2003,  Shimotsu and Phillips, 2005, Giraitis {\it et al.}, 2003, 2006, Abadir {\it et al.}, 2007 or Moulines {\it et al.}, 2007). Moreover, adaptive data-driven versions of these estimators have been defined to avoid any trimming or bandwidth parameters, generally required by these methods (see for instance Giraitis {\it et al.}, 2000, Moulines and Soulier, 2003, Veitch {\it et al.}, 2003, or Bardet and Bibi, 2012). The first objective of this paper is to propose for the first time an adaptive data-driven estimator of $d$ satisfying a CLT, providing confidence intervals or tests, that is valid for $d<0.5$ but also for $d\geq 0.5$. This objective is achieved by using Multidimensional Increment Ratio (MIR) statistics. \\
The original version of the Increment Ratio (IR) statistic was defined in  Surgailis {\it et al.} (2008) from an observed trajectory $(X_1,\ldots,X_N)$ of a process $X$ satisfying $I(d)$ and for any $\ell \in \N^*$ as:
\begin{eqnarray}\label{defIR}
IR_N(\ell):=\frac{1}{N-3\ell} \,  \sum_{k=0}^{N-3\ell-1}\frac{\displaystyle \Big |\sum_{t=k+1}^{k+\ell}X_{t+\ell}-\sum_{t=k+1}^{k+\ell}X_{t}+\sum_{t=k+\ell+1}^{k+2\ell}X_{t+\ell}-\sum_{t=k+\ell+1}^{k+2\ell}X_{t}\Big |}{\displaystyle \Big |\sum_{t=k+1}^{k+\ell}X_{t+\ell}-\sum_{t=k+1}^{k+\ell} X_{t}\Big|+\Big|\sum_{t=k+\ell+1}^{k+2\ell} X_{t+\ell}-\sum_{t=k+\ell+1}^{k+2\ell}X_{t}\Big|}.
\end{eqnarray}
Under conditions on $X$, if $\ell \to \infty $ and $N /\ell \to \infty$, it is proved that the statistics $IR_N(\ell)$ converges to a deterministic monotone function $\Lambda_0(d)$ on $(-0.5,1.5)$ and a CLT is also established for $d \in (-0.5,0.5)\cup (0.5,1.25)$ when $\ell$ is large enough with respect to $N$. As a consequence of this CLT and using the Delta-method, the estimator $\widehat d_N(\ell)=\Lambda_0^{-1}(IR_N(\ell))$, where $d \mapsto \Lambda_0(d)$ is a smooth and increasing function defined in \eqref{DefinitionRhod}, is a consistent estimator of $d$ satisfying also a CLT (see more details below). However this new estimator was not totally satisfying. Firstly, it requires the knowledge of the second order behavior of the spectral density, which is clearly unknown in practice, to select $\ell$. Secondly, its numerical accuracy is reasonable but clearly lower than those of local Whittle or log-periodogram estimators. As a consequence, in Bardet and Dola (2012), we built a data-driven Multidimensional $IR$ (MIR) estimator $\widetilde d_N^{(MIR)}$ computed from $\big (\widehat d_N(\ell_1),\cdots,\widehat d_N(\ell_p)\big )$ (see its precise definition in \eqref{dtilde}) improving both these points but only for $-0.5 < d<0.5$. This is an adaptive data-driven semiparametric estimator of $d$ achieving the minimax convergence rate (up to a multiplicative logarithm factor) and requiring no regulation of any auxiliary parameter (as bandwidth or trimming parameters). Moreover, its numerical performances are comparable to the ones of local Whittle, log-periodogram or wavelet based estimators.  \\
Here we extend this previous work to the case $0.5\leq d <1.25$. Hence we obtain a CLT satisfied by $\widetilde d_N^{(MIR)}$ for all $d\in (-0.5,1.25)$ with an explicit asymptotic variance depending on $d$ only. This especially allows to obtain confidence intervals of $d$ using Slutsky Lemma. The case $d=0.5$ is now studied and this offers new perspectives: our data-driven estimator can be used for building a stationarity (or non-stationarity) test since $0.5$ is the ``border number'' between stationarity and non-stationarity. The case $d=1$ is also now studied and it provides another application of $\widetilde d_N^{(MIR)}$ to test fractional unit roots, that is to decide between $d=1$ and $d<1$.   ~\\
\\
There exist several famous stationarity (or non-stationarity) tests. We may cite parametric tests defined by Elliott {\it et al.} (1996) or Ng and Perron (1996, 2001). For non parametric stationarity tests we may cite the LMC test (see Leybourne and  McCabe, 2000) and the KPSS (Kwiatkowski, Phillips, Schmidt, Shin) test (see Kwiatkowski {\it et al.}, 1992), improved by the V/S test (see Giraitis {\it et al.}, 2003). For non-stationarity tests we may cite the Augmented Dickey-Fuller test (see Said and Dickey, 1984) and the Philipps and Perron test (PP test in the sequel, see Philipps and Perron, 1988).  All these tests are unit roots tests (except the V/S test which is also a short-memory test), which are, roughly speaking, tests based on the model $X_t=\rho\, X_{t-1} + \varepsilon_t$ with $|\rho|\leq 1$. A  right-tailed test $d\geq 0.5$ for a process satisfying Assumption $I(d)$ is therefore a refinement of a basic unit roots test since the case $\rho=1$ is a particular case of $I(1)$ and the case $|\rho|<1$ a particular case of $I(0)$. Thus, a stationarity (or non-stationarity
test) based on the estimator of $d$ provides a useful complementary test to usual unit roots tests. \\
This principle of stationarity test linked to $d$ has been already investigated in many articles. We can  cite Robinson (1994), Tanaka (1999), Ling and Li (2001), Ling (2003) or Nielsen (2004). It also be used to define fractional unit roots tests, like the Fractional Dickey-Fuller test defined by Dolado {\it et al.} (2002) or the cointegration rank test defined by Breitung {\it et al.} (2002). However, all these papers provide parametric tests, with a specified model (for instance ARFIMA or ARFIMA-GARCH processes). Extensions proposed by Lobato an Velasco (2007) and Dolado {\it et al.} (2008) allow to extend these tests to I$(d)$ processes with ARMA component but requiring the knowledge of the order of this component. Several papers have been recently devoted to the construction of semiparametric tests, see for instance Giraitis {\it et al.} (2006), Abadir {\it et al.} (2007) or Surgailis {\it et al.} (2008).  But these semiparametric tests require the knowledge of the second-order expansion of the spectral density at the zero frequency for adjusting a trimming or a bandwidth parameter; an a priori choice of this parameter always implies a bias of the estimator and therefore of the test when this asymptotic expansion is not smooth enough. \\
The MIR estimator $\widetilde d_N^{(MIR)}$ does not present this drawback. It converges to $d$ following a CLT with minimax convergence rate without any {\it a priori} choice of a parameter. This result is established for time series belonging to the Gaussian semiparametric class $IG(d,\beta)$ defined below (see the beginning of Section \ref{MIR}) which is a restriction of the general set $I(d)$. As a consequence, we construct a stationarity test $\widetilde S_N$ which accepts the stationarity assumption when $\widetilde d_N^{(MIR)}\leq 0.5+s$ with $s$ a threshold only depending on the type I error test, $\widetilde d_N^{(MIR)}$ and $N$. A non-stationarity test $\widetilde T_N$ accepting the non-stationarity assumption when $\widetilde d_N^{(MIR)}\geq 0.5-s$ is also proposed.
By the same principle, $\widetilde d_N^{(MIR)}$ also provides a fractional unit roots test $\widetilde F_N$ for deciding between $d=1$ and $d<1$, {\it i.e.} whether $\widetilde F_N \geq 1-s'$ or not, where $s'$ is a threshold depending on the type I error test. {\. 
~\\
In Section \ref{simu}, numerous simulations are realized on several models of time series (short and long-memory processes).
First, the new MIR estimator  $\widetilde d_N^{(MIR)}$ is compared to the most efficient and famous semiparametric estimators for several values of $d \in (-0.5,1.25)$. The performances of $\widetilde d_N^{(MIR)}$  are convincing: this estimator is accurate and robust for all the considered processes and is globally as efficient as local Whittle, log-periodogram or wavelet based estimators.
Secondly, the new stationarity $\widetilde S_N$ and non-stationarity $\widetilde T_N$ tests are compared to the most famous unit roots tests (KPSS, V/S, ADF and PP tests) for numerous I$(d)$ processes. And the results are quite surprising: even on AR$(1)$ or ARIMA$(1,1,0)$ processes, $\widetilde S_N$ and $\widetilde T_N$ tests provide convincing results which are comparable to those obtained with ADF and PP tests while those tests are especially built for these specific processes. For long-memory processes (such as ARFIMA processes), the results are clear: $\widetilde S_N$ and $\widetilde T_N$ tests are accurate tests of (non)stationarity while ADF and PP tests are only helpful when $d$ is close to $0$ or $1$. Concerning the new MIR fractional unit roots test $\widetilde F_N$, it provides satisfying results for all considered processes, while fractional unit roots tests such as the fractional Dickey-Fuller test developed by Dolado {\it et al.} (2002) or the efficient Wald test introduced by Lobato and Velasco (2007) are respectively only performing for ARFIMA$(0,d,0)$ processes or a class of long-memory processes containing ARFIMA$(p,d,0)$ processes but not ARFIMA$(p,d,q)$ processes with $q\geq 1$.    \\
~\\
The forthcoming Section~\ref{MIR} is devoted to the definition and asymptotic behavior of MIR estimators of $d$ and Section \ref{Adapt} studies an adaptive MIR estimator. The stationarity and non-stationarity tests are presented in Section \ref{test} while Section \ref{simu} deals with the results of simulations, Section \ref{conclu} provides conclusive remarks and Section \ref{proofs} contains all the proofs.

\section{The Multidimensional Increment Ratio statistic}\label{MIR}
Now we consider a semiparametric class $IG(d,\beta)$ which is a refinement of the general class $I(d)$. For $-0.5< d<1.5$ and $\beta>0$ define: \\
~\\
{\bf Assumption $IG(d,\beta)$:~} {\em $X=(X_t)_{t\in \Z}$  is a Gaussian process such that there exist $\epsilon >
0$, $c_0>0$, $c'_0>0$  and $c_1 \in \R$ satisfying:}
\begin{enumerate}
\item  {\em  if $d<0.5$,  $X$ is a stationary process with a spectral density $f$ satisfying for all $\lambda \in (-\pi,0)\cup (0,\pi)$}
\begin{eqnarray}\label{AssumptionS}
f(\lambda) =  c_{0}|\lambda|^{-2d}+c_{1}|\lambda|^{-2d+\beta} +O\big(|\lambda|^{-2d+\beta+\epsilon}\big) \quad \mbox{and}\quad |f'(\lambda)|\leq  c'_0 \, \lambda^{-2d-1}.
\end{eqnarray}
\item  {\em  if $0.5\leq d<1.5$, $U=(U_t)_{t\in \Z}=(X_{t}-X_{t-1})_{t\in \Z}$ is a stationary process with a spectral density $f$ satisfying for all $\lambda \in (-\pi,0)\cup (0,\pi)$}
\begin{eqnarray}\label{AssumptionAS}
f(\lambda) =  c_{0}|\lambda|^{2-2d}+c_{1}|\lambda|^{2-2d+\beta} +O\big(|\lambda|^{2-2d+\beta+\epsilon}\big) \quad \mbox{and}\quad |f'(\lambda)|\leq  c'_0 \, \lambda^{-2d+1}.
\end{eqnarray}
\end{enumerate}
Note that  Assumption $IG(d,\beta)$ is a particular (but still general) case of the set $I(d)$ defined above.
\begin{rem}\label{lin}
\begin{itemize}
\item The extension of the definition  from $d \in (-0.5,0.5)$ to $d\in [0.5,1.5)$ is classical since the conditions on the process is replaced by conditions on the process' increments.
\item The condition on the derivative $f'$ is not really usual. However, this is not a very restrictive condition since it is satisfied by all the classical long-range dependent processes.
\item In the literature, all the theoretical results concerning the IR statistic for time series have been obtained under Gaussian assumptions. In Surgailis {\it et al.} (2008) and Bardet and Dola (2012), simulations exhibited that the obtained limit theorems should be also valid for linear processes. However a theoretical proof of such result would require limit theorems for functionals of multidimensional linear processes difficult to be established, even if numerical experiments seem to show that this assumption could be replaced by the assumption that $X$ is a linear process having a fourth-moment order like it was done in Giraitis and Surgailis (1990).
\end{itemize}
\end{rem}
\noindent In this section, under Assumption $IG(d,\beta)$, we establish central limit theorems which extend to the case $d\in [0.5,1.25)$ those already obtained in Bardet and Dola (2012) for $d\in (-0.5,0.5)$. Let $X=(X_k)_{k\in \N}$ be a process satisfying Assumption $IG(d,\beta)$ and $(X_1,\cdots,X_N)$ be a path of $X$.
The statistic $IR_N$ (see its definition in \eqref{defIR})  was first defined in Surgailis {\it et al.} (2008) as a way to estimate the memory parameter. In Bardet and Surgailis (2011) a simple version of IR-statistic was also introduced to measure the roughness of continuous time processes, and its connection with level crossing index by geometrical arguments. The main interest of such a statistic is to be very robust to additional or multiplicative trends.  \\
~\\
As in Bardet and Dola (2012), let $m_{j}=j\, m,~j=1,\cdots,p$ with $p \in \N^*$ and $m\in \N^*$, and define the random vector $(IR_{N}(m_j))_{1\leq j\leq p}$.
In the sequel we naturally extend the results obtained for $m\in \N^*$ to $m\in (0,\infty)$ by the convention: $(IR_{N}(j \, m))_{1\leq j\leq p}=(IR_{N}(j \, [m]))_{1\leq j\leq p}$ (which does not change the asymptotic results). \\
For $H\in (0,1)$, let $B_H=(B_H(t))_{t\in \R}$ be a standard fractional Brownian motion, {\it i.e.} a centered Gaussian process having stationary increments and such as $\Cov\big (B_H(t)\,,\, B_H(s)\big )=\frac 1 2\, \big (|t|^{2H} +|s|^{2H} -|t-s|^{2H}  \big )$. Now, using obvious modifications of Surgailis {\it et al.} (2008), for $d \in (-0.5,1.25)$ and $p\in \N^*$, define the stationary multidimensional centered Gaussian processes $\big (Z_d^{(1)}(\tau),\cdots, Z_d^{(p)}(\tau)\big)$ such as for $\tau \in \R$,
\begin{eqnarray}\label{DefZ2}
Z_{d}^{(j)}(\tau):=\left \{ \begin{array}{ll}
\displaystyle \frac{\sqrt{2d(2d+1)}} {\sqrt{|4^{d+0.5}-4|}} \,  \int_{0}^{1}\big ( B_{d-0.5}(\tau+s+j)-B_{d-0.5}(\tau+s)\big )ds&\mbox{if $d\in (0.5,1.25)$} \\
\displaystyle \frac{1} {\sqrt{|4^{d+0.5}-4|}} \,  \big ( B_{d+0.5}(\tau+2j)-2\, B_{d+0.5}(\tau+j)+B_{d+0.5}(\tau)\big )&\mbox{if $d\in (-0.5,0.5)$}
\end{array} \right ..
\end{eqnarray}
Using a continuous extension when $d\to 0.5$ of the covariance of $Z_{d}^{(j)}(\tau)$, we also define the stationary multidimensional centered Gaussian processes $\big (Z_{0.5}^{(1)}(\tau),\cdots, Z_{0.5}^{(p)}(\tau)\big)$ with covariance such as:
\begin{eqnarray*}
\Cov \big ( Z_{0.5}^{(i)}(0), Z_{0.5}^{(j)}(\tau)\big ):=\frac 1 {4 \, \log 2 } \, \big ( -h(\tau+i-j)+h(\tau+i)+h(\tau-j)-h(\tau)\big )\quad \mbox{for $\tau \in \R$},
\end{eqnarray*}
where $h(x)=\frac 1 2 \big (|x-1|^2\log|x-1|+|x+1|^2\log|x+1|-2|x|^2\log|x|\big )$ for $x\in \R$, using the convention $0 \times \log 0=0$. Now, we establish a multidimensional CLT satisfied by $(IR_{N}(j \, m))_{1\leq j\leq p}$ for all $d\in (-0.5,1.25)$:
\begin{prop}\label{MCLT}
Assume that Assumption $IG(d,\beta)$ holds with $-0.5\leq d<1.25$ and $\beta>0$. Then
\begin{eqnarray}\label{TLC1}
\sqrt{\frac{N}{m}}\Big (IR_N(j \, m)-\E \big [IR_N(j \, m)\big ]\Big )_{1\leq j \leq p}\limiteloiNm {\cal N}(0, \Gamma_p(d))
\end{eqnarray}
with $\Gamma_p(d)=(\sigma_{i,j}(d))_{1\leq i,j\leq p}$ where for $i,j\in \{1,\ldots,p\}$,
\begin{eqnarray}\label{ssigma}
\sigma_{i,j}(d):&=&\int_{-\infty}^{\infty}\Cov \Big (\frac{|Z^{(i)}_{d}(0)+Z^{(i)}_{d}(i)|}{|Z^{(i)}_{d}(0)|+|Z^{(i)}_{d}(i)|}, \frac{|Z^{(j)}_{d}(\tau)+Z^{(j)}_{d}(\tau+j)|}{|Z^{(j)}_{d}(\tau)|+|Z^{(j)}_{d}(\tau+j)|}\Big )d\tau.
\end{eqnarray}
\end{prop}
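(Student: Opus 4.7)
The case $d\in(-0.5,0.5)$ is already proved in Bardet and Dola (2012), so the new content is the extension to $d\in[0.5,1.25)$, together with the boundary case $d=0.5$. The plan is to reduce the non-stationary regime $d\ge 0.5$ to a multivariate Breuer--Major type CLT for a bounded nonlinear functional of the underlying stationary Gaussian increment process, and then to identify the limiting covariance with the integral form \eqref{ssigma}.

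For $d\in[0.5,1.5)$, I would first rewrite each block $X_{t+\ell}-X_t=\sum_{s=t+1}^{t+\ell}U_s$ in terms of the stationary Gaussian increments $U$, whose spectral density behaves as $c_0|\lambda|^{2-2d}+\cdots$ near zero. The numerator and denominator sums appearing in $IR_N(jm)$ then become double sums of $U$, producing a centered stationary Gaussian pair $(A^{(j)}_{k,m},B^{(j)}_{k,m})_k$ with common scale $s_{jm}$; the $0$-homogeneity of $\Phi(x,y):=|x+y|/(|x|+|y|)$ gives $IR_N(jm)=(N-3jm)^{-1}\sum_k\Phi(A^{(j)}_{k,m}/s_{jm},B^{(j)}_{k,m}/s_{jm})$. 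Using the spectral assumption \eqref{AssumptionAS} together with the derivative bound on $f$, I would show that the $(2p)$-dimensional normalized Gaussian vector indexed by $(j,k)$ converges in covariance structure to $\bigl(Z_d^{(j)}(\tau),Z_d^{(j)}(\tau+j)\bigr)_{1\le j\le p}$ given in \eqref{DefZ2}. The continuous extension at $d=0.5$ follows by letting $d\to 0.5$ in the explicit covariance: the apparent singularity generated by $|4^{d+0.5}-4|^{-1/2}$ is cancelled by a vanishing numerator and produces exactly the log-kernel $h$ of the statement.

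Next, since $\Phi-\E[\Phi(G)]$ is bounded, even, and of Hermite rank at least $2$ with respect to any non-degenerate centered Gaussian law on $\R^2$, I would invoke a multivariate Breuer--Major theorem (as in Arcones (1994) and the version already used in Bardet and Dola (2012)) for the stationary Gaussian sequence $\{(A^{(j)}_{k,m}/s_{jm},B^{(j)}_{k,m}/s_{jm})_j\}_k$. The spectral tail $c_0|\lambda|^{2-2d}$ of the increments forces the covariances of the normalized vector to decay at rate $|k|^{2d-3}$, which is square summable precisely when $d<1.25$ -- exactly the range claimed. A Hermite expansion followed by Fubini converts the limit of the Riemann sum of covariances into the integral form $\sigma_{i,j}(d)$ of \eqref{ssigma}.

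The main obstacle is to control, uniformly in the lag $k$ and with an integrable-in-$k$ bound, the deviation between the exact covariance of the pair at lag $k$ and its limit $\Cov(Z_d^{(i)}(0),Z_d^{(j)}(k))$, in order to apply dominated convergence on the sum of covariances. This is where the bound $|f'(\lambda)|\le c_0'|\lambda|^{-2d+1}$ of \eqref{AssumptionAS} plays its role: it yields a sufficient approximation rate for the covariance kernel via a Riemann-sum/spectral estimate. A secondary difficulty is the treatment of the boundary $d=0.5$, where one must check that every estimate above extends continuously in $d$; finally, the bias coming from the $O(|\lambda|^{-2d+\beta+\epsilon})$ remainder in \eqref{AssumptionS}--\eqref{AssumptionAS} must be shown to be of order $o(\sqrt{m/N})$ under the scaling $m\to\infty$, $N/m\to\infty$, by a standard frequency-domain estimate.
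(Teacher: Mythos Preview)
Your outline follows essentially the same route as the paper: pass to the stationary Gaussian increments for $d\ge 0.5$, exploit the Hermite rank $2$ of $\Phi$ together with Arcones' (1994) covariance inequality to obtain summable covariances of $\eta_m(k)=\Phi(\cdot)$, and then lift the unidimensional CLT to the vector $(IR_N(jm))_{1\le j\le p}$ exactly as in Bardet and Dola (2012). Two remarks are worth making.

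First, at the boundary $d=0.5$ the paper does \emph{not} argue by continuity of the estimates in $d$. It reopens the integral computation of Surgailis \textit{et al.} (2008) at $d=0.5$ and shows directly that the Gaussian correlation satisfies $|\gamma_m(j)|\le C\,m/j$ for $j\ge m$, by bounding the two pieces $I_1,I_2$ of the cosine integral $\int_0^\pi \cos(jx)\,x\,(c_0+O(x^\beta))\sin^4(mx/2)/\sin^4(x/2)\,dx$; squaring via the Hermite-rank-$2$ inequality then yields $|\Cov(\eta_m(0),\eta_m(j))|\le C\,m^2/j^2$, which is precisely what feeds the CLT at this endpoint. Your proposed continuity argument (``check that every estimate above extends continuously in $d$'') would require uniformity of the implicit constants in $d$ over the open interval in order to pass to the endpoint, and that is not automatic; the direct computation is both shorter and safer here.

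Second, the last sentence of your proposal about controlling the $O(|\lambda|^{-2d+\beta+\epsilon})$ remainder to order $o(\sqrt{m/N})$ does not belong to this proposition. The CLT \eqref{TLC1} is centered at $\E[IR_N(jm)]$, so no bias term enters at all. That bias control is carried out separately in Property~\ref{devEIR} and is only used in Theorem~\ref{cltnada} to replace $\E[IR_N(jm)]$ by $\Lambda_0(d)$.
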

\noindent The proof of this proposition as well as all the other proofs can be found in Section \ref{proofs}. \\
~\\
In the sequel, we will assume  that $\Gamma_p(d)$ is a positive definite  matrix for all $d \in (-0.5,1.25)$. Extensive numerical experiments seem to give strong evidence of such a property. Now, the CLT \eqref{TLC1} can be used for estimating $d$. To begin with,
\begin{property}\label{devEIR}
Let $X$ satisfy Assumption $IG(d,\beta)$ with $0.5\leq d<1.5$ and $0<\beta\leq 2$. Then, there exists a non-vanishing constant $K(d,\beta)$ depending only on $d$ and $\beta$ such that for $m$ large enough,
\begin{eqnarray*}
\E \big [IR_N(m)\big ] =\left\{ \begin{array}{ll}  \Lambda_0(d)+K(d,\beta)\times m^{-\beta} \, \big (1 + o(1)\big )& \mbox {if}~\beta<1+2d \\
 \Lambda_0(d)+ K(0.5,\beta) \times m^{-2} \log m  \, \big (1 + o(1)\big )& \mbox {if}~\beta=2~\mbox{and}~d=0.5
 \end{array}\right .
\end{eqnarray*}
\begin{eqnarray}\label{DefinitionRhod} 
\mbox{with}\quad \Lambda_0(d)&:=&\Lambda(\rho(d))\quad \mbox{where} \quad \rho(d):=\left \{\begin{array}{ll}\displaystyle \frac{4^{d+1.5}-9^{d+0.5}-7}{2(4-4^{d+0.5})}& \mbox{for}\quad 0.5<d<1.5\\
\displaystyle \frac{9\log(3)}{8\log(2)}-2 & \mbox{for}\quad d=0.5 \end{array} \right. \\
\mbox{and} && \Lambda(r):=\frac{2}{\pi}\, \arctan\sqrt{\frac{1+r}{1-r}}+\frac{1}{\pi}\, \sqrt{\frac{1+r}{1-r}}\log(\frac{2}{1+r})\quad \mbox{for $|r|\leq 1$}.
\end{eqnarray}
\end{property}
\noindent Therefore by choosing $m$ and $N$ such as  $\big (\sqrt {N/m} \big )m^{-\beta}\log m \to 0$ when $m,N\to
\infty$, the term $\E \big [IR(jm)\big ]$ can be replaced by $\Lambda_0(d)$ in Proposition \ref{MCLT}. Then, using the Delta-method with the function $(x_i)_{1\leq i \leq p} \mapsto (\Lambda^{-1}_0(x_i))_{1\leq i \leq p}$ (the function $d \in (-0.5,1.5) \to \Lambda_0(d)$ is a ${\cal C}^\infty$ increasing function), we obtain:
\begin{thm}\label{cltnada}
Let $\widehat d_N(j \, m):=\Lambda_0^{-1}\big (IR_N(j \, m)\big )$ for $1\leq j \leq p$. Assume that Assumption $IG(d,\beta)$ holds with $0.5\leq d<1.25$ and $0<\beta\leq 2$. Then if $m \sim C\, N^\alpha$ with $C>0$ and $(1+2\beta)^{-1}<\alpha<1$,
\begin{eqnarray}\label{cltd}
\sqrt{\frac{N}{m}}\Big (\widehat d_N(j \, m)-d\Big )_{1\leq j \leq p}\limiteloiN {\cal N}\Big(0,(\Lambda'_0(d))^{-2}\, \Gamma_p(d)\Big ).
\end{eqnarray}
\end{thm}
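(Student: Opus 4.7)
The plan is to combine Proposition \ref{MCLT} (joint CLT for the centered IR vector), Property \ref{devEIR} (bias expansion of $\E[IR_N(m)]$), and the multivariate Delta method applied to $\Lambda_0^{-1}$. Since $0.5\le d<1.25$ and $0<\beta\le 2$, Property \ref{devEIR} applies to each coordinate $IR_N(jm)$ for $j\in\{1,\ldots,p\}$ (with $p$ fixed) and gives, uniformly in $j$,
\[
\E[IR_N(jm)] - \Lambda_0(d) = O(m^{-\beta}),
\]
together with the refined bound $O(m^{-2}\log m)$ in the boundary case $(d,\beta)=(0.5,2)$.

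Next I would verify that the chosen bandwidth kills the bias. Under $m\sim C N^{\alpha}$, a direct rate check yields
\[
\sqrt{N/m}\, m^{-\beta}\sim N^{(1-\alpha)/2-\alpha\beta},\qquad \text{with}\quad (1-\alpha)/2-\alpha\beta<0
\]
precisely because the assumption $\alpha>1/(1+2\beta)$ is equivalent to $\alpha(1+2\beta)>1$. Specializing at $\beta=2$, the exponent becomes $(1-\alpha)/2-2\alpha<0$ for $\alpha>1/5$, so the extra $\log m$ in the boundary case is absorbed into a strictly polynomial decay. Consequently $\sqrt{N/m}(\E[IR_N(jm)] - \Lambda_0(d))\to 0$ for each $j$, and Slutsky's lemma transforms the CLT \eqref{TLC1} into
\[
\sqrt{N/m}\bigl(IR_N(jm) - \Lambda_0(d)\bigr)_{1\le j\le p}\limiteloiN \mathcal{N}\bigl(0,\Gamma_p(d)\bigr).
\]

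Finally I would apply the multivariate Delta method at the point $(\Lambda_0(d),\ldots,\Lambda_0(d))$ with the coordinate-wise map $\Phi(x_1,\ldots,x_p) = (\Lambda_0^{-1}(x_1),\ldots,\Lambda_0^{-1}(x_p))$. Since $\Lambda_0$ is $\mathcal{C}^\infty$ and strictly increasing on $(-0.5,1.5)$, $\Phi$ is smooth in a neighbourhood of the target point and its Jacobian there is the scalar multiple $(\Lambda_0'(d))^{-1}I_p$. Conjugating $\Gamma_p(d)$ by this diagonal differential gives the announced limiting covariance $(\Lambda_0'(d))^{-2}\,\Gamma_p(d)$, which is exactly \eqref{cltd}.

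The work is mostly bookkeeping; the one point to watch is the boundary case $(d,\beta)=(0.5,2)$, where both the bias expansion carries a logarithmic correction and smoothness of $\Lambda_0$ at $d=0.5$ must be invoked, the latter being guaranteed by the continuous extension of $\rho$ in \eqref{DefinitionRhod}. All of the genuinely analytical work, namely the CLT for the raw IR vector and the non-trivial bias expansion, has been pre-packaged in Proposition \ref{MCLT} and Property \ref{devEIR}.
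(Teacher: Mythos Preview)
Your proposal is correct and follows essentially the same route as the paper: use Property~\ref{devEIR} to show the bias $\sqrt{N/m}\,(\E[IR_N(jm)]-\Lambda_0(d))$ vanishes under $\alpha>(1+2\beta)^{-1}$, combine with the CLT of Proposition~\ref{MCLT} via Slutsky, and finish with the Delta method applied to $\Lambda_0^{-1}$. If anything, you give more detail than the paper, which dispatches the rate check and the boundary case $(d,\beta)=(0.5,2)$ in a single sentence.
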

\noindent This result is an extension to the case $0.5\leq d \leq 1.25$ from the case $-0.5<d<0.5$ already obtained in Bardet and Dola (2012). Note that the consistency of $\widehat d_N(j \, m)$ is ensured when $1.25\leq d <1.5$ but the previous CLT does not hold (the asymptotic variance of $\sqrt{\frac{N}{m}}\, \widehat d_N(j \, m)$ diverges to $\infty$ when $d >1.25$, see Surgailis {\it et al.}, 2008).\\
~\\
Now define
\begin{equation}\label{Sigma}
\widehat \Sigma_N(m):=(\Lambda'_0(\widehat d_N(m))^{-2}\, \Gamma_p(\widehat d_N(m)).
\end{equation}
The function $d\in (-0.5,1.5) \mapsto \sigma(d)/\Lambda'(d)$ is ${\cal C}^\infty$ and therefore, under assumptions of Theorem \ref{cltnada},
$$\widehat \Sigma_N(m) \limiteprobaN (\Lambda'_0(d))^{-2}\, \Gamma_p(d).$$
Thus, a pseudo-generalized least square estimation (PGLSE) of $d$ can be defined by
$$
\widetilde d_N(m):=\big (J_p^{\intercal} \big (\widehat \Sigma_N(m)\big )^{-1} J_p \big )^{-1}\, J_p^{\intercal} \,  \big ( \widehat \Sigma_N(m) \big )^{-1} \big (\widehat d_N(m_i) \big ) _{1\leq i \leq p}
$$
with $J_p:=(1)_{1\leq j \leq p}$ and denoting $J_p^{\intercal}$ its transpose.
From a Gauss-Markov Theorem type (see again Bardet and Dola, 2012), the asymptotic variance of $\widetilde d_N(m)$ is smaller than the one of any $\widehat d_N(jm)$, $j=1,\ldots,p$. Hence, we obtain under the assumptions of Theorem \ref{cltnada}:
\begin{eqnarray}\label{TLCdtilde}
\sqrt{\frac{N}{m}}\big (\widetilde d_N(m)-d\big ) \limiteloiN {\cal N}\Big(0 \,, \, \Lambda'_0(d)^{-2}\,\big (J_p^{\intercal} \, \Gamma^{-1}_p(d)J_p\big )^{-1}\Big ).
\end{eqnarray}

\section{The adaptive data-driven version of the estimator}\label{Adapt}
Theorem \ref{cltnada} and CLT \eqref{TLCdtilde} require the knowledge of $\beta$ to be applied. But in practice $\beta$ is unknown. The procedure defined in Bardet and Bibi (2012) or Bardet and Dola (2012) can be used for obtaining a data-driven selection of an optimal sequence $(\widetilde m_N)$ derived from an estimation of $\beta$. Since the case $d \in (-0.5,0.5)$ was studied in Bardet and Dola (2012) we consider here $d \in [0.5,1.25)$ and for $\alpha \in (0,1)$, define
\begin{equation}\label{QNdef}
Q_N(\alpha,d):=\big (\widehat d_N(j\, N^\alpha)-\widetilde d_N(N^{\alpha}) \big )^{\intercal}_{1\leq j \leq p} \big (\widehat \Sigma_N(N^\alpha)\big )^{-1}\big (\widehat d_N(j\, N^\alpha)-\widetilde d_N(N^{\alpha}) \big )_{1\leq j \leq p},
\end{equation}
which corresponds to the sum of the pseudo-generalized squared distance between the points $(\widehat d_N(j\, N^\alpha))_j$ and PGLSE of $d$.
Note that by the previous convention, $\widehat d_N(j\, N^\alpha)=\widehat d_N(j\, [N^\alpha])$ and $\widetilde d_N(N^\alpha)=\widetilde d_N([N^\alpha])$. Then $\widehat Q_N(\alpha)$ can be minimized on a discretization of $(0,1)$ and define:
\begin{eqnarray*}
\widehat \alpha_N :=\mbox{Argmin}_{\alpha \in {\cal A}_N}
\widehat Q_N(\alpha)\quad \mbox{with}\quad {\cal A}_N=\Big \{\frac {2}{\log
N}\,,\,\frac { 3}{\log N}\,, \ldots,\frac {\log [N/p]}{\log N}
\Big \}.
\end{eqnarray*}
\begin{rem}\label{defAn}
The choice of the set of discretization ${\cal A}_N$ is implied by our proof of convergence of $\widehat \alpha_N$. If the interval $(0,1)$ is stepped in $N^c$ points, with $c>0$, the used proof cannot
attest this convergence. However $\log N$ may be replaced in the previous expression of ${\cal A}_N$ by any negligible function
of $N$ compared to functions $N^c$ with $c>0$ (for instance, $(\log N)^a$ or $a\log N$ with $a>0$ ).
\end{rem}
\noindent From the central limit theorem ({\ref{cltd}}) one deduces the following limit theorem:
\begin{prop}\label{hatalpha}
Assume that Assumption $IG(d,\beta)$ holds with $0.5\leq d<1.25$ and $0<\beta\leq 2$. Then,
$$
\widehat \alpha_N
\limiteprobaN \alpha^*=\frac 1 {(1+2\beta)}.
$$
\end{prop}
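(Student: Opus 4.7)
The plan is to exploit the fact that $Q_N(\alpha,\widehat d_N(N^\alpha))$ admits a bias-plus-variance asymptotic expansion whose minimum in $\alpha$ lies precisely at $\alpha^*=1/(1+2\beta)$.

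First I would use Property~\ref{devEIR} together with the Delta-method applied to $\Lambda_0^{-1}$ (which is $\mathcal{C}^\infty$) and Theorem~\ref{cltnada} to get, for $m=N^\alpha$ and each $j\in\{1,\ldots,p\}$,
\begin{equation*}
\widehat d_N(jm)-d \;=\; \frac{K(d,\beta)}{\Lambda'_0(d)}\,(jm)^{-\beta}\bigl(1+o(1)\bigr) \;+\; \sqrt{\frac{m}{N}}\;\zeta_N^{(j)}(\alpha),
\end{equation*}
where the vector $(\zeta_N^{(j)}(\alpha))_{1\le j\le p}$ is asymptotically centered Gaussian with covariance $(\Lambda'_0(d))^{-2}\Gamma_p(d)$. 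Since $\widetilde d_N(m)$ is the $\widehat\Sigma_N^{-1}$-weighted projection of $(\widehat d_N(jm))_j$ onto the span of $J_p$, subtracting removes the part of the expansion that is collinear with $J_p$ but leaves the $j$-dependent bias. Plugging this decomposition into the quadratic form defining $Q_N$ and using $\widehat\Sigma_N(m)\to(\Lambda'_0(d))^{-2}\Gamma_p(d)$ in probability, one obtains
\begin{equation*}
Q_N(\alpha,\widehat d_N(N^\alpha)) \;=\; A(d,\beta)\,N^{-2\alpha\beta}\bigl(1+o_{\Pro}(1)\bigr) \;+\; B(d)\,N^{\alpha-1}\,\chi_N(\alpha) \;+\; \text{cross term},
\end{equation*}
where $A(d,\beta)>0$ (the vector $(j^{-\beta})_{1\le j\le p}$ is not collinear with $J_p$ as soon as $\beta>0$ and $p\ge 2$), $B(d)>0$, and $\chi_N(\alpha)$ is a $\chi^2_{p-1}$-type quadratic form in the $\zeta_N^{(j)}(\alpha)$ that is bounded away from $0$ and $\infty$ in probability. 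The cross term is of order $N^{-\alpha\beta+(\alpha-1)/2}$, i.e.\ the geometric mean of the two main terms, and is therefore negligible away from the balance point.

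Next I would observe that the deterministic function $\alpha\mapsto N^{-2\alpha\beta}+N^{\alpha-1}$ is strictly convex in $\alpha$ and attains its unique minimum essentially at $\alpha^*=1/(1+2\beta)$ (up to an $O(1/\log N)$ shift), with common value $N^{-2\beta/(1+2\beta)}$; away from $\alpha^*$ it grows by a polynomial factor in $N$. Fix $\eta>0$ and let $\alpha^\sharp_N\in\mathcal{A}_N$ be the grid point closest to $\alpha^*$: the step $1/\log N$ of $\mathcal{A}_N$ forces $\alpha^\sharp_N\to\alpha^*$ and $Q_N(\alpha^\sharp_N,\widehat d_N)=O_{\Pro}(N^{-2\beta/(1+2\beta)})$. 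For any $\alpha\in\mathcal{A}_N$ with $|\alpha-\alpha^*|>\eta$, the expansion yields $Q_N(\alpha,\widehat d_N)\ge c\,N^{-2\beta/(1+2\beta)+\delta(\eta)}$ with probability tending to one, for some $\delta(\eta)>0$. Since $|\mathcal{A}_N|=O(\log N)$ and every $IR_N(jm)$ is a Lipschitz functional of the underlying Gaussian vector, Gaussian concentration supplies deviation bounds polynomial in $N$ for $Q_N$ at each scale, so a union bound over the grid is harmless: the polynomial gap $N^{\delta(\eta)}$ absorbs the logarithmic cardinality. Hence $\Pro(|\widehat\alpha_N-\alpha^*|>\eta)\to 0$.

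The main obstacle is establishing the bias-variance expansion of $Q_N$ \emph{uniformly} in $\alpha\in\mathcal{A}_N$. Two points require care: controlling the $o(1)$ remainders in Property~\ref{devEIR} with an explicit polynomial rate valid simultaneously at every scale $m=N^\alpha$, and ensuring that $\widehat\Sigma_N(m)$ stays invertible with bounded inverse uniformly on the grid. This is exactly why Remark~\ref{defAn} specifies a grid of logarithmic density rather than polynomial: the available deviation bounds are polynomial in $N$ and so can absorb an $O(\log N)$ cardinality but not a polynomial one. Once this uniform control is in place, the comparison argument sketched above closes the proof.
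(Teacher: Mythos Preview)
Your proposal is correct and follows essentially the same bias--variance argument that underlies the paper's proof, which simply invokes Proposition~2 of Bardet and Dola (2012): decompose $\widehat d_N(jN^\alpha)-d$ into a deterministic bias of order $(jN^\alpha)^{-\beta}$ and a stochastic term of order $N^{(\alpha-1)/2}$, deduce that $\widehat Q_N(\alpha)$ behaves like $N^{-2\alpha\beta}+N^{\alpha-1}$, and locate its minimizer at $\alpha^*=1/(1+2\beta)$, with the logarithmic grid $\mathcal A_N$ permitting the required uniform control. The only addendum is that the hypothesis $0<\beta\le 2$ together with $d\ge 0.5$ forces $\beta\le 2d+1$, which is precisely why the competing regime $\alpha^*=1/(4d+3)$ from the earlier paper does not appear here; note also that in the boundary case $\beta=2$, $d=0.5$, Property~\ref{devEIR} gives a bias $K(0.5,2)\,m^{-2}\log m$ rather than $K(d,\beta)\,m^{-\beta}$, so your expansion picks up an extra $\log N$ factor that is harmless for the localization of the argmin.
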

\noindent
Finally define
$$
\widetilde
m_N:=N^{\widetilde \alpha_N}\quad \mbox{with}\quad \widetilde \alpha_N:=\widehat \alpha_N+ \frac {6\, \widehat \alpha_N} {(p-2)(1-\widehat
{\alpha}_N )} \cdot \frac {\log \log N}{\log N}.
$$
and the estimator
\begin{equation}\label{dtilde}
\widetilde d_N^{(MIR)} :=\widetilde
d_N(\widetilde m_N)=\widetilde d_N(N^{\widetilde \alpha_N}).
\end{equation}
(the definition and use of $\widetilde \alpha_N$ instead of $\widehat \alpha_N$ are explained just before Theorem 2 in Bardet and Dola, 2012).  The following theorem provides the asymptotic behavior of the estimator $\widetilde d_N^{(MIR)}$:
\begin{thm}\label{tildeD}
Under assumptions of Proposition \ref{hatalpha},
\begin{eqnarray}\label{CLTD2}
&&\sqrt{\frac{N}{N^{\widetilde \alpha_N }}} \big(\widetilde d_N^{(MIR)}  - d \big)
\limiteloiN    {\cal N}\Big (0\, ; \,\Lambda'_0(d)^{-2}\,\big (J_p^{\intercal} \, \Gamma^{-1}_p(d)J_p\big )^{-1}\Big ).
\end{eqnarray}
Moreover, $\displaystyle ~~\forall
\rho>\frac {2(1+3\beta)}{(p-2)\beta},~\mbox{}~~ \frac {N^{\frac
{\beta}{1+2\beta}} }{(\log N)^\rho} \cdot \big|\widetilde d_N^{(MIR)} - d \big|
\limiteprobaN  0.$
\end{thm}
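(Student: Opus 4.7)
The proof follows the blueprint already laid down in Bardet and Dola (2012) for the case $-0.5<d<0.5$; the only substantive new input is that the CLT \eqref{TLCdtilde} and the bias expansion of Property \ref{devEIR} have been extended in the preceding sections to $0.5\leq d<1.25$. The strategy is to combine Proposition \ref{hatalpha} (which localises $\widehat\alpha_N$ near $\alpha^\star=1/(1+2\beta)$) with \eqref{TLCdtilde} (which gives a CLT for a \emph{deterministic} bandwidth $m\sim CN^\alpha$ as soon as $\alpha>\alpha^\star$), after taking care of the random nature of $\widetilde m_N=N^{\widetilde\alpha_N}$. The role of the correction $\frac{6\widehat\alpha_N}{(p-2)(1-\widehat\alpha_N)}\cdot\frac{\log\log N}{\log N}$ in the definition of $\widetilde\alpha_N$ is to push the chosen exponent strictly above $\alpha^\star$ with probability tending to $1$, thus killing the bias while only inflating the variance by a logarithmic factor.

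\textbf{Step 1: from a random to a deterministic bandwidth.} I would first refine Proposition \ref{hatalpha} into a concentration statement of the form $|\widehat\alpha_N-\alpha^\star|=o_P(\log\log N/\log N)$, using the quantitative bounds obtainable from the CLT \eqref{cltd} applied to each $j\in\{1,\dots,p\}$ together with the discretisation $\mathcal{A}_N$ (whose cardinality is $O(\log N)$). With this bound and the definition of $\widetilde\alpha_N$, one checks that $\widetilde\alpha_N-\alpha^\star\geq \frac{c}{\log N}\log\log N$ on an event of probability $1-o(1)$, for some explicit $c>0$ coming from the prefactor $6/((p-2)(1-\alpha^\star))$. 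On this event, a partition argument over the (at most $O(\log\log N)$) deterministic values of $\mathcal{A}_N$ that $\widetilde\alpha_N$ is allowed to take reduces the random-bandwidth problem to the deterministic one.

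\textbf{Step 2: bias versus variance on each piece.} For a deterministic $m=N^\alpha$ with $\alpha>\alpha^\star$, decompose
\begin{equation*}
\sqrt{N/m}\,(\widetilde d_N(m)-d)=\sqrt{N/m}\,(\widetilde d_N(m)-\E\widetilde d_N(m))+\sqrt{N/m}\,(\E\widetilde d_N(m)-d).
\end{equation*}
The first term converges in distribution to the target Gaussian by \eqref{TLCdtilde}. For the second, Property \ref{devEIR} and the Delta method yield $|\E\widetilde d_N(m)-d|=O(m^{-\beta})$, so $\sqrt{N/m}\,|\E\widetilde d_N(m)-d|=O\bigl(N^{(1-\alpha)/2-\alpha\beta}\bigr)=o(1)$ exactly because $\alpha>\alpha^\star$. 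Plugging $m=\widetilde m_N$ and using the continuity in $d$ of the asymptotic variance $\Lambda_0'(d)^{-2}(J_p^{\intercal}\Gamma_p^{-1}(d)J_p)^{-1}$ together with Slutsky's lemma gives \eqref{CLTD2}.

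\textbf{Step 3: rate of convergence.} For the final claim I would return to the same decomposition and use the same bounds \emph{without} dividing by $\sqrt{N/m}$: the stochastic part is $O_P(\sqrt{\widetilde m_N/N})=O_P\bigl(N^{-\beta/(1+2\beta)}\cdot N^{(\widetilde\alpha_N-\alpha^\star)/2}\bigr)$ and the bias part is $O(\widetilde m_N^{-\beta})$, of smaller order. Since $\widetilde\alpha_N-\alpha^\star$ is dominated by the logarithmic correction plus the fluctuation of $\widehat\alpha_N$, one has $N^{(\widetilde\alpha_N-\alpha^\star)/2}=(\log N)^{O(1)}$ in probability, and a careful bookkeeping of the constants (the factor $6\alpha^\star/((p-2)(1-\alpha^\star))$ in the correction together with the additive fluctuation of $\widehat\alpha_N$) produces the exponent $\rho>2(1+3\beta)/((p-2)\beta)$ that appears in the statement.

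\textbf{Main obstacle.} The only delicate point is the transfer of the CLT \eqref{TLCdtilde} from deterministic to random bandwidths: one has to show that $\widetilde m_N$ remains in a narrow logarithmic window around $N^{\alpha^\star}$ with overwhelming probability and that, on that window, the limiting Gaussian distribution (which depends continuously on $d$ but not on $\alpha$) is the same for every admissible $\alpha$. Everything else is the mechanical combination of Propositions \ref{MCLT}, \ref{hatalpha}, Property \ref{devEIR} and Theorem \ref{cltnada} already proved in the paper, adapted verbatim from the analogous argument in Bardet and Dola (2012).
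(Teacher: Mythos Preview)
Your proposal is correct and follows essentially the same approach as the paper, which simply states that the proof is identical to that of Theorem~2 in Bardet and Dola (2012); you have in fact reconstructed the outline of that argument (localisation of $\widehat\alpha_N$, logarithmic over-shooting via $\widetilde\alpha_N$, bias--variance decomposition on deterministic bandwidths, and bookkeeping of the logarithmic loss for the rate). The only thing the paper adds beyond your sketch is the observation that, since here $0<\beta\le 2$ and $0.5\le d<1.25$, one always has $\beta\le 2d+1$ so that $\alpha^\star=1/(1+2\beta)$ (rather than $1/((1+2\beta)\wedge(4d+3))$), which slightly simplifies the transposition from the earlier paper.
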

\noindent The convergence rate of $\widetilde d_N^{(MIR)}$ is the same (up to a multiplicative logarithm factor) than the one of minimax estimator of $d$ in this semiparametric framework (see Giraitis {\it et al.}, 1997). As it was already established in Surgailis {\it et al.} (2008), the use of IR statistics confers a robustness of $\widetilde d_N^{(MIR)}$ to smooth additive or multiplicative trends (see also the results of simulations thereafter). The additional advantage of $\widetilde d_N^{(MIR)}$ with respect to other adaptive estimators of $d$ (see Moulines and Soulier, 2003, for an overview over frequency domain estimators of $d$) is the central limit theorem (\ref{CLTD2}) satisfied by $\widetilde d_N^{(MIR)}$. This central limit theorem provides asymptotic confidence intervals on $d$ which are unobtainable for instance with FEXP or local periodogram adaptive estimator (see respectively Iouditsky {et al.}, 2001, and Giraitis {\it et al.}, 2000 or Henry, 2007).
Moreover $\widetilde d_N^{(MIR)}$ can be used for $d\in (-0.5,1.25)$, {\it i.e.} as well for stationary and non-stationary processes, without modifications in its definition. Both these advantages allow to define stationarity and fractional unit roots tests based on $\widetilde d_N^{(MIR)}$.

\section{Stationarity, non-stationarity and fractional unit roots tests}\label{test}
Assume that $(X_1,\ldots,X_N)$ is an observed trajectory of a process $X=(X_k)_{k\in \Z}$. We define here new stationarity, non-stationarity and fractional unit roots tests for $X$ based on $\widetilde d_N^{(MIR)}$.
\subsection{A stationarity test}
There exist many stationarity and non-stationarity tests. The most famous stationarity tests are certainly the following unit roots tests:
\begin{itemize}
\item The KPSS (Kwiatkowski, Phillips, Schmidt, Shin) test  (see Kwiatkowsli {\it et al.}, 1992);
\item The V/S test (see its presentation in Giraitis {\it et al.}, 2001) which was first defined for testing the presence of long-memory versus short-memory. As it was already notified in Giraitis {\it et al.} (2003-2006), the V/S test is also more powerful than the KPSS test for testing the stationarity.
\item A test based on unidimensional IR statistic and developed in Surgailis {\it et al.} (2008).
\end{itemize}
More precisely, we consider here the following statistical hypothesis test:
\begin{itemize}
\item \underline{Hypothesis $H_0$} (stationarity):  $(X_t)_{t\in \Z}$ is a process satisfying Assumption $IG(d,\beta)$ with $d\in (-0.5,0.5)$ and $0<\beta\leq 2$.
\item \underline{Hypothesis $H_1$} (non-stationarity):  $(X_t)_{t\in \Z}$ is a process satisfying Assumption $IG(d,\beta)$ with $d\in [0.5,1.25) $ and $0<\beta\leq 2$.
\end{itemize}
We use a test based on $\widetilde d_N^{(MIR)}$ for deciding between both these hypothesis. Hence from the previous CLT \eqref{CLTD2} and with a significance level $\alpha$, define
\begin{equation}\label{SNtild}
\widetilde S_N:= \1_{ \widetilde{d}_{N}^{(MIR)}>0.5+\sigma_p(0.5)\,   q_{1-\alpha} \,   N^{(\widetilde{\alpha}_N-1)/2}},
\end{equation}
where $\sigma_p(0.5)=\Big ( \Lambda'_0(0.5)^{-2}\,\big (J_p^{\intercal} \, \Gamma^{-1}_p(0.5)J_p\big )^{-1}\Big )^{1/2}$(see \eqref{CLTD2}) and $q_{1-\alpha}$ is the $(1-\alpha)$ quantile of a standard Gaussian random variable ${\cal N}(0,1)$. \\
~\\
Then we define the following rules of decision:
\begin{center} "$H_0$ (stationarity) is accepted when $\widetilde S_N=0$ and rejected when $\widetilde S_N=1$."
\end{center}
\begin{rem}
In fact, the previous stationarity test $\widetilde S_N$ defined in \eqref{SNtild} can also be seen as a semiparametric test $d<d_0$ versus $d \geq d_0$ with $d_0=0.5$. It is obviously possible to extend it to any value $d_0 \in (-0.5,1.25)$ by defining
$
\widetilde S^{(d_0)}_N:= \1_{ \widetilde{d}_{N}^{(MIR)}>d_0+\sigma_p(d_0)\,   q_{1-\alpha} \,   N^{(\widetilde{\alpha}_N-1)/2}}.
$
The particular case $d_0=1$ will be considered thereafter as a fractional unit roots test.
\end{rem}
From previous results, it is clear that:
\begin{popy} \label{StatIG}
Under Hypothesis $H_0$, the asymptotic type I error of the test $\widetilde S_N$ is $\alpha$ and under Hypothesis $H_1$, the test power tends to $1$.
\end{popy}
Moreover, this test can be used as a unit roots (UR) test. Indeed, define the following typical problem of UR test. Let $X_t=at+b+\varepsilon_t$, with $(a,b)\in \R^2$, and $\varepsilon_t$ an ARIMA$(p,d,q)$ with $d=0$ or $d=1$. Then, a (simplified)  problem of a UR test is to decide between:
\begin{itemize}
\item $H^{UR}_0$: $d=0$ and $(\varepsilon_t)$ is a stationary ARMA$(p',q')$ process.
\item $H^{UR}_1$: $d=1$ and $(\varepsilon_t-\varepsilon_{t-1})_t$ is a stationary ARMA$(p',q')$ process.
\end{itemize}
Then,
\begin{popy}\label{StatUR}
Under Hypothesis $H^{UR}_0$, the type I error of this unit roots test problem using $\widetilde S_N$ decreases to $0$ when $N \to \infty$ and under Hypothesis $H^{UR}_1$, the test power tends to $1$.
\end{popy}

\subsection{A non-stationarity test}
Unit roots tests are also often used as non-stationarity test. Hence, between the most famous non-stationarity tests and in a nonparametric framework, consider
\begin{itemize}
\item The Augmented Dickey-Fuller (ADF) test (see Said and Dickey, 1984);
\item The Philipps and Perron (PP) test (see for instance Phillips and Perron 1988).
\end{itemize}
Using the statistic $\widetilde{d}_{N}^{(MIR)}$ we propose a new non-stationarity test $\widetilde T_N$ for deciding between:
\begin{itemize}
\item \underline{Hypothesis $H'_0$} (non-stationarity):  $(X_t)_{t\in \Z}$ is a process satisfying Assumption $IG(d,\beta)$ with $d\in [0.5,1.25)$  and $\beta \in (0,2]$.
\item \underline{Hypothesis $H'_1$} (stationarity):  $(X_t)_{t\in \Z}$ is a process satisfying Assumption $IG(d,\beta)$ with $-0.5 < d<1/2$ and $\beta \in (0,2]$.
\end{itemize}
Then, the decision rule of the test under the significance level $\alpha$ is the following:
\begin{center}
"Hypothesis $H'_0$ is accepted when $\widetilde T_N=1$ and rejected when $\widetilde T_N=0$"
\end{center}
where
\begin{equation}\label{TNtild}
\widetilde T_N:= \1_{ \widetilde{d}_{N}^{(MIR)}<0.5-\sigma_p(0.5)\,   q_{1-\alpha} \,   N^{(\widetilde{\alpha}_N-1)/2}}.
\end{equation}
Then,
\begin{popy} \label{NStatIG}
Under Hypothesis $H_0'$, the asymptotic type I error of the test $\widetilde T_N$ is $\alpha$ and under Hypothesis $H_1'$ the test power tends to $1$.
\end{popy}
As previously, this test can also  be used as a unit roots test where $X_t=at+b+\varepsilon_t$, with $(a,b)\in \R^2$, and $\varepsilon_t$ an ARIMA$(p,d,q)$ with $d=0$ or $d=1$. We consider here a ``second'' simplified problem of unit roots test which is to decide between:
\begin{itemize}
\item $H^{UR'}_0$: $d=1$ and $(\varepsilon_t-\varepsilon_{t-1})_t$ is a stationary ARMA$(p',q')$ process.
\item $H^{UR'}_1$: $d=0$ and $(\varepsilon_t)_t$ is a stationary ARMA$(p',q')$ process.
\end{itemize}
Then,
\begin{popy}\label{StatUR2}
Under Hypothesis $H^{UR'}_0$, the type I error of the unit roots test problem using $\widetilde T_N$ decreases to $0$ when $N \to \infty$ and under Hypothesis $H^{UR'}_1$ the test power tends to $1$.
\end{popy}

\subsection{A fractional unit roots test}
Fractional unit roots tests have also been defined for specifying the eventual long-memory property of the process in a unit roots test. In our Gaussian framework, they consist on testing
\begin{itemize}
\item \underline{Hypothesis $H^{FUR}_0$}:  $(X_t)_{t\in \Z}$ is a "random walk"-type process such as:
\begin{equation}\label{H0FUR}
X_t=X_{t-1}+u_t
\end{equation}
with $(u_t)_t$ a process satisfying Assumption $IG(0,\beta)$ with   $0<\beta\leq 2$. Therefore $(X_t)$ is a process satisfying Assumption $IG(1,\beta)$.
\item \underline{Hypothesis $H^{FUR}_1$} :  $(X_t)_{t\in \Z}$ is a process satisfying the following relation:
\begin{equation}\label{H1FUR}
X_t=X_{t-1}+\phi \, \Delta^{d_1} X_{t-1}+u_t
\end{equation}
where $(u_t)_t$ is a process satisfying Assumption $IG(0,\beta)$ with   $0<\beta\leq 2$, $\phi<0$, and $\Delta ^{d_1}$ is the fractional integration operator of order $0<d_1<1$, {\it i.e.} $ \Delta ^{d_1}X_{t-1}=\sum_{i=0}^{t-1} \pi_i(d_1) X_{t-1-i}$ and $\pi_i(d_1)=\Gamma(i-d_1)\big (\Gamma(i+1)\Gamma(-d_1) \big) ^{-1}$.
\end{itemize}
After computations, it follows that if $X$ satisfies \eqref{H1FUR}, then $X$ satisfies Assumption $IG(d_1,\beta)$.
There exist several fractional unit roots tests (see for example, Robinson, 1994, Tanaka, 1999, Dolado {\it et al.}, 2002, or more recently, Kew and Harris, 2009).
It is clear that the estimator $\widetilde d_N^{(MIR)}$ can be used in such a framework for testing fractional unit roots by comparing $\widetilde d_N^{(MIR)}$ to $1$. Hence, the decision rule of the test under the significance level $\alpha$ is the following:
\begin{center}
"Hypothesis $H^{FUR}_0$ is accepted when $\widetilde F_N=1$ and rejected when $\widetilde F_N=0$"
\end{center}
where
\begin{equation}\label{FNtild}
\widetilde F_N:= \1_{ \widetilde{d}_{N}^{(MIR)}>1-\sigma_p(1)\,   q_{1-\alpha} \,   N^{(\widetilde{\alpha}_N-1)/2}}.
\end{equation}
Then as previously
\begin{popy} \label{FURIG}
Under Hypothesis $H^{FUR}_0$, the asymptotic type I error of the test $\widetilde F_N$ is $\alpha$ and under Hypothesis $H_1^{FUR}$ the test power tends to $1$.
\end{popy}

\section{Results of simulations}\label{simu}
\subsection{Numerical procedure for computing the estimator and tests}
First of all, softwares used in this Section are available on {\tt
http://samm.univ-paris1.fr/-Jean-Marc-Bardet} with a free access on (in Matlab language).\\
~\\
The concrete procedure for applying the MIR-test of stationarity is the following:
\begin{enumerate}
 \item using additional simulations (performed on ARMA, ARFIMA, FGN processes and not presented here in order to avoid overloading the paper), we have observed that the value of the parameter $p$ is not really important with respect to the accuracy of the test (there are less than $10\%$ of fluctuations on the value of $\widetilde d^{(MIR)}_N$ when $p$ varies). However, for optimizing our procedure (in the sense of minimizing from simulation the mean square error of the $d$ estimation) we chose $p$ as a stepwise function of $N$:
$$
p=5\times \1_{\{N<120\}}+ 10\times \1_{\{120\leq N<800\}}+ 15\times \1_{\{800\leq N<10000\}}+20\times \1_{\{N\geq 10000\}}.
$$
\item as the values of $\sigma_p(0.5)$ and $\sigma_p(1)$ are essential for computing the thresholds of the tests, we have estimated them and obtained:
\begin{itemize}
\item $\sigma_{5}(0.5)\simeq 0.9082, ~\sigma_{10}(0.5) \simeq 0.8289, ~ \sigma_{15}(0.5) \simeq 0.8016~\mbox{and}~  \sigma_{20}(0.5)\simeq 0.7861.
$
\item $\sigma_{5}(1)\simeq 0.8381, ~\sigma_{10}(1) \simeq 0.8102, ~ \sigma_{15}(1) \simeq 0.8082~\mbox{and}~  \sigma_{20}(1)\simeq 0.7929.
$
\end{itemize}
\item then after computing $\widetilde m_N$ presented in Section \ref{Adapt}, the adaptive estimator $\widetilde d^{(MIR)}_N$ defined in \eqref{dtilde}, the test statistics  $\widetilde{S}_{N}$ defined in \eqref{SNtild}, $\widetilde{T}_{N}$ defined in \eqref{TNtild} and $\widetilde{F}_{N}$ defined in \eqref{FNtild} are computed.
\end{enumerate}
\subsection{Monte-Carlo experiments on several time series}
In the sequel the results are obtained from $1000$ generated independent trajectories of
each process defined below. The concrete
procedures of generation of these processes are obtained from the
circulant matrix method, as detailed in Doukhan {\it et al.} (2003).
The simulations are realized for different values of $d$ and
$N$ and processes which satisfy Assumption $IG(d,\beta)$:
\begin{enumerate}
\item the usual ARIMA$(p',d,q')$ processes with respectively $d=0$ or $d=1$ and an innovation process which is a Gaussian white noise.
Such processes satisfy
{Assumption} $IG(0,2)$ or $IG(1,2)$ (respectively);
\item the ARFIMA$(p',d,q')$ processes with parameter $d$ such that
$d \in (-0.5,1.25)$ and an innovation process which is a Gaussian white noise.
Such ARFIMA$(p',d,q')$  processes satisfy Assumption $IG(d,2)$ (note that ARIMA processes are particular cases of ARFIMA processes).

\item the Gaussian stationary processes $X^{(d,c_1,d)}$ with the spectral density
\begin{eqnarray}
f_3(\lambda)=\frac 1 {|\lambda|^{2d}}(1+c_1\, |\lambda|^{\beta})~~~\mbox{for
$\lambda \in [-\pi,0)\cup (0,\pi]$},
\end{eqnarray}
with $d \in (-0.5,1.5)$, $c_1>0$ and $\beta\in (0,\infty)$. Therefore the spectral density $f_{3}$ implies that
Assumption $IG(d,\beta)$ holds. In the sequel we will first use $c_1=1$ and $\beta=0.1$, implying that the second order term of the spectral density is "less negligible" than in case of ARFIMA processes, and $c_1=0$, implying that the second order term of the spectral density is "more negligible" than in case of ARFIMA processes.
\item the Gaussian stationary processes $X^{(d,\log)}$, such as its spectral density is
\begin{eqnarray}
f_4(\lambda)=\frac 1 {|\lambda|^{2d}}(1+ |\log (\lambda) |\, |\lambda|)~~~\mbox{for
$\lambda \in [-\pi,0)\cup (0,\pi]$},
\end{eqnarray}
with $d \in (-0.5,1.5)$. Therefore the spectral density $f_{4}$ implies that
Assumption $I(d)$ holds, but not  $IG(d,\beta)$ {\it stricto sensu}.
\item the Gaussian non-stationary process $X^{(trend)}$ which can be written as $   X^{(trend)}_t=a_n(t)+\sigma_n(t) \times ARFIMA(0,d,0)$, where the additive  and multiplicative trends are respectively $a_n(t)=\sin(2\pi t/n) $ and $\sigma_n(t)=\sqrt {2t/n}$ (for us we chose a non-polynomial  but smooth additive trend).
\end{enumerate}

\subsubsection{Comparison of $\widetilde d_N^{(MIR)}$  with other semiparametric estimators of $d$}
Here we first compare the performance of the data-driven MIR estimator $\widetilde d_N^{(MIR)}$ with other famous semiparametric estimators of $d$:
\begin{itemize}
\item $\widehat d_N^{(IR)}$ is the original version of the IR based estimator defined in Surgailis {\it et al.} (2008). As it was recommended in that article, we chose $m=10$.
\item $\widetilde{d}_N^{(MS)}$ is the global log-periodogram estimator introduced
by Moulines and Soulier (2003), also called FEXP estimator,
with bias-variance balance parameter $\kappa=2$. Such an estimator was shown to be consistent for $d\in (-0.5,1)$. This semiparametric estimator is an adaptive data-driven estimator of $d$.
\item $\widehat d_N^{(ADG)}$ is the extended local Whittle estimator defined by
Abadir, Distaso and Giraitis (2007) which is consistent for $d>-3/2$. It is a generalization of the local Whittle estimator introduced by Robinson (1995b), consistent for $d<0.75$, following a first extension proposed by Phillips (1999)  and Shimotsu and Phillips (2005). This estimator avoids the tapering used for instance in Velasco (1999b) or Hurvich and Chen (2000). The trimming parameter is chosen as $m=N^{0.65}$ (this is not an adaptive data-driven estimator) following the numerical recommendations of Abadir {\it et al.} (2007).
\item $\widetilde{d}_N^{(WAV)}$ is an adaptive data-driven wavelet based estimator
introduced in Bardet and Bibi (2012) using a Lemarie-Meyer type wavelet (another similar choice could be the adaptive wavelet estimator introduced
in Veitch {\it et al.}, 2003, using a Daubechie's wavelet, but its robustness property are slightly less interesting). The asymptotic normality of such estimator is established for $d>-0.5$ (when the number of vanishing moments of the wavelet function is large enough).
\end{itemize}
Note that  only $\widehat{d}_N^{(IR)}$ and  $\widehat d_N^{(ADG)}$ are not data-driven adaptive among the $5$ estimators.
Table \ref{Table1} provides the results of simulations for ARIMA$(1,d,0)$ ($N=500$, $N=5000$ and $N=50000$). For ARFIMA$(0,d,0)$, ARFIMA$(1,d,1)$, $X^{(d,1,1)}$, $X^{(d,0,1)}$, $X^{(d,\log)}$ and $X^{(trend)}$ processes and several values of $d$, the results of simulations are presented  for $N=500$ (Table \ref{Table2}), $N=5000$ (Table \ref{Table3}) and $N=50000$ (Table \ref{Table4}).  \\
~\\
\begin{table}[t]
{\footnotesize
\begin{center}
\begin{tabular}{|c|c|c|c|c|c||c|c|c|c|c|}
\hline\hline
  $N=500$  &$d=0$ & $d=0$ &  $d=0$ & $d=0$ & $d=0$  &$d=1$ & $d=1$  & $d=1$ & $d=1$  & $d=1$ \\
ARIMA$(1,d,0)$ &$\phi$=-0.1 &$\phi$=-0.3 &$\phi$=-0.5 &$\phi$=-0.7 &$\phi$=-0.9 &$\phi$=-0.1 &$\phi$=-0.3  &$\phi$=-0.5  &$\phi$=-0.7 &$\phi$=-0.9 ~ \\
\hline \hline
$\sqrt{MSE}$ $\widetilde{d}_N^{(MIR)}$ & 0.1022& 0.1174  &  0.1617   & 0.2507   & 0.6114   & 0.1088   & 0.1133   & 0.1313& 0.1954 & 0.3625
\\
\hline
$\sqrt{MSE}$ $\widehat{d}_N^{(IR)}$  & 0.2169   & 0.2341   & 0.2456  &0.3061 & 0.6209 &0.1590    & 0.1589  & 0.1535 & 0.1587 & 0.2859
\\
\hline
$\sqrt{MSE}$ $\widetilde{d}_N^{(MS)}$  &0.1424 & 0.1407 &  0.1463  & 0.1539   & 0.4141   & 0.1721   & 0.1699   & 0.1655 & 0.1849 & 0.3298
\\
\hline
$\sqrt{MSE}$ $\widehat{d}_N^{(ADG)}$&  0.0785 & 0.0847 &  0.1244   & 0.2649   & 0.6755    & 0.0787   & 0.0805   & 0.1205 & 0.2633 & 0.4985
\\
\hline
$\sqrt{MSE}$ $\widetilde{d}_N^{(WAV)}$ & 0.0676 & 0.1397 & 0.2408    & 0.4106   & 0.7531   & 0.0717   & 0.0980   & 0.1341 & 0.1785 & 0.3883
\\
\hline
\hline
    $N=5000$  &$d=0$ & $d=0$& $d=0$ & $d=0$ & $d=0$  &$d=1$ & $d=1$  & $d=1$ & $d=1$  & $d=1$\\
ARIMA$(1,d,0)$ &$\phi$=-0.1 &$\phi$=-0.3 &$\phi$=-0.5 &$\phi$=-0.7 &$\phi$=-0.9 &$\phi$=-0.1 &$\phi$=-0.3  &$\phi$=-0.5  &$\phi$=-0.7 &$\phi$=-0.9  \\
\hline \hline
$\sqrt{MSE}$ $\widetilde{d}_N^{(MIR)}$  & 0.0340 & 0.0553  & 0.0759   & 0.1024   & 0.2893   & 0.0344   & 0.0479   & 0.0614 & 0.0872 & 0.2679
\\
\hline
$\sqrt{MSE}$ $\widehat{d}_N^{(IR)}$   & 0.0678  & 0.0804 & 0.1150 &  0.2290   & 0.6041    & 0.0479   & 0.0477  & 0.0524 & 0.0900 & 0.2721
 \\
\hline
$\sqrt{MSE}$ $\widetilde{d}_N^{(MS)}$ & 0.0412 & 0.0440   & 0.0421   & 0.0441   & 0.2223   & 0.0422   & 0.0454  & 0.0470 & 0.0462 & 0.1533
\\
\hline
$\sqrt{MSE}$ $\widehat{d}_N^{(ADG)}$ & 0.0321 & 0.0337   & 0.0372   & 0.0816   & 0.3751   & 0.0318   & 0.0335   & 0.0398 & 0.0817 & 0.3724
\\
\hline
$\sqrt{MSE}$ $\widetilde{d}_N^{(WAV)}$& 0.0376 & 0.0625    & 0.0716   & 0.0970   & 0.2144   & 0.0344   & 0.0496   & 0.0552 & 0.0700 & 0.1245
\\
\hline
\hline
    $N=50000$ &$d=0$ & $d=0$ & $d=0$ & $d=0$ & $d=0$  &$d=1$ & $d=1$  & $d=1$ & $d=1$  & $d=1$ \\
ARIMA$(1,d,0)$ &$\phi$=-0.1 &$\phi$=-0.3 &$\phi$=-0.5 &$\phi$=-0.7 &$\phi$=-0.9 &$\phi$=-0.1 &$\phi$=-0.3  &$\phi$=-0.5  &$\phi$=-0.7 &$\phi$=-0.9  \\
\hline \hline
$\sqrt{MSE}$ $\widetilde{d}_N^{(MIR)}$  & 0.0152 & 0.0246  & 0.0303   & 0.0435   & 0.0756   & 0.0109   & 0.0179   & 0.0274 & 0.0348 & 0.0560
\\
\hline
$\sqrt{MSE}$ $\widehat{d}_N^{(IR)}$   & 0.0238  & 0.0469 & 0.0945 &  0.2124   & 0.5985    & 0.0170   & 0.0182  & 0.0303 & 0.0827 & 0.2745
 \\
\hline
$\sqrt{MSE}$ $\widetilde{d}_N^{(MS)}$ & 0.0140 & 0.0140   & 0.0137   & 0.0143   & 0.1489   & 0.0130   & 0.0149  & 0.0142 & 0.0158 & 0.1005
\\
\hline
$\sqrt{MSE}$ $\widehat{d}_N^{(ADG)}$ & 0.0152 & 0.0130  & 0.0145   & 0.0220   & 0.1418   & 0.0165   & 0.0159   & 0.0148 & 0.0231 & 0.1396
\\
\hline
$\sqrt{MSE}$ $\widetilde{d}_N^{(WAV)}$& 0.0190 & 0.0171    & 0.0366   & 0.0353   & 0.0568   & 0.0227   & 0.0224  & 0.0291 & 0.0458 & 0.0517
\\
\hline
\end{tabular}
~\\
\vspace{0.7cm}
\begin{tabular}{|c|c|c|c|c|c||c|c|c|c|c|}
\hline\hline
  $N=500$  &$d=0$ & $d=0$ &  $d=0$ & $d=0$ & $d=0$  &$d=1$ & $d=1$  & $d=1$ & $d=1$  & $d=1$ \\
ARIMA$(1,d,0)$ &$\phi$=0.1~ &$\phi$=0.3~ &$\phi$=0.5 ~&$\phi$=0.7~ &$\phi$=0.9 ~&$\phi$=0.1~ &$\phi$=0.3 ~ &$\phi$=0.5~  &$\phi$=0.7 ~&$\phi$=0.9 ~ \\
\hline \hline
$\sqrt{MSE}$ $\widetilde{d}_N^{(MIR)}$ & 0.0995 & 0.1020 &  0.1115  & 0.1280  & 0.1165   & 0.1065   & 0.1102  & 0.1131& 0.1161 & 0.1155
\\
\hline
$\sqrt{MSE}$ $\widehat{d}_N^{(IR)}$  & 0.2093   & 0.2017   & 0.2069  &0.2096 & 0.1878 &0.1632   & 0.1649  & 0.1587 & 0.1611 & 0.1658
\\
\hline
$\sqrt{MSE}$ $\widetilde{d}_N^{(MS)}$  &0.1478 & 0.1382 &  0.1430  & 0.1401  & 0.1521   & 0.1649   & 0.1658   & 0.1658 & 0.1827 & 0.2006
\\
\hline
$\sqrt{MSE}$ $\widehat{d}_N^{(ADG)}$&  0.0809 & 0.0776 &  0.0808   & 0.0820   & 0.0765    & 0.0807   & 0.0809  & 0.0843 & 0.0809 & 0.0825
\\
\hline
$\sqrt{MSE}$ $\widetilde{d}_N^{(WAV)}$ & 0.0994 & 0.1214 & 0.1278  & 0.1257   & 0.1247   & 0.0875  & 0.1054  & 0.1058 & 0.1042 & 0.1002
\\
\hline
\hline
    $N=5000$  &$d=0$ & $d=0$& $d=0$ & $d=0$ & $d=0$  &$d=1$ & $d=1$  & $d=1$ & $d=1$  & $d=1$\\
ARIMA$(1,d,0)$ &$\phi$=0.1 &$\phi$=0.3 &$\phi$=0.5 &$\phi$=0.7 &$\phi$=0.9 &$\phi$=0.1 &$\phi$=0.3  &$\phi$=0.5  &$\phi$=0.7 &$\phi$=0.9  \\
\hline \hline
$\sqrt{MSE}$ $\widetilde{d}_N^{(MIR)}$  & 0.0354 & 0.0499  & 0.0726   & 0.0950   & 0.0897   & 0.0430   & 0.0437   & 0.0442 & 0.0492 & 0.0546
\\
\hline
$\sqrt{MSE}$ $\widehat{d}_N^{(IR)}$   & 0.0669  & 0.0743 & 0.0918 &  0.1271   & 0.1031   & 0.0503   & 0.0490 & 0.0486 & 0.0490 & 0.0522
 \\
\hline
$\sqrt{MSE}$ $\widetilde{d}_N^{(MS)}$ & 0.0435 & 0.0450   & 0.0422  & 0.0423   & 0.0518   & 0.0451   & 0.0450  & 0.0443 & 0.0451 & 0.0566
\\
\hline
$\sqrt{MSE}$ $\widehat{d}_N^{(ADG)}$ & 0.0360 & 0.0350  & 0.0324   & 0.0328   & 0.0337   & 0.0335   & 0.0341   & 0.0334 & 0.0338 & 0.0334
\\
\hline
$\sqrt{MSE}$ $\widetilde{d}_N^{(WAV)}$& 0.0405 & 0.0486    & 0.0469   & 0.0472   & 0.0480   & 0.0369   & 0.0511  & 0.0448 & 0.0484 & 0.0451
\\
\hline
\hline
    $N=5000$ &$d=0$ & $d=0$ & $d=0$ & $d=0$ & $d=0$  &$d=1$ & $d=1$  & $d=1$ & $d=1$  & $d=1$ \\
ARIMA$(1,d,0)$ &$\phi$=0.1 &$\phi$=0.3 &$\phi$=0.5 &$\phi$=0.7 &$\phi$=0.9 &$\phi$=0.1 &$\phi$=0.3  &$\phi$=0.5  &$\phi$=0.7 &$\phi$=0.9  \\
\hline \hline
$\sqrt{MSE}$ $\widetilde{d}_N^{(MIR)}$  & 0.0161 & 0.0280  & 0.0435   & 0.0672   & 0.0727  & 0.0143   & 0.0152  & 0.0163& 0.0162& 0.0191
\\
\hline
$\sqrt{MSE}$ $\widehat{d}_N^{(IR)}$   & 0.0217 & 0.0407 & 0.0662 &  0.1130   & 0.0880    & 0.0148   & 0.0160  & 0.0189 & 0.0176 & 0.0193
 \\
\hline
$\sqrt{MSE}$ $\widetilde{d}_N^{(MS)}$ & 0.0146 & 0.0150   & 0.0153   & 0.0162   & 0.0261   & 0.0132   & 0.0136  & 0.0137 & 0.0148 & 0.0209
\\
\hline
$\sqrt{MSE}$ $\widehat{d}_N^{(ADG)}$ & 0.0158 & 0.0151  & 0.0144   & 0.0147   & 0.0136   & 0.0133   & 0.0142   & 0.0132 & 0.0140 & 0.0150
\\
\hline
$\sqrt{MSE}$ $\widetilde{d}_N^{(WAV)}$& 0.0114 & 0.0147    & 0.0174   & 0.0213   & 0.0258  & 0.0185   & 0.0260  & 0.0446 & 0.0194 & 0.0260
\\
\hline
\end{tabular}
\end{center}
}
\caption{{\small\label{Table1} : Comparison between $\widetilde{d}^{(MIR)}_N$  and other famous semiparametric estimators of $d$ ($\widehat{d}_N^{(IR)}$, $\widetilde{d}_N^{(MS)}$, $\widehat{d}_N^{(ADG)}$  and $\widetilde{d}_N^{(WAV)}$) applied to ARIMA$(1,d,0)$ process (defined by $X_t+\phi X_{t-1}=\varepsilon_t$ for $d=0$ and $(X_t-X_{t-1})+\phi (X_{t-1}-X_{t-2})=\varepsilon_t$ for $d=1$) for  several values of $\phi$ and $N$ and $1000$ independent replications.}
}

\end{table}

\begin{table}[t]
{\footnotesize
\begin{center}
\begin{tabular}{|c|c|c|c|c||c|c|c|c|}
\hline\hline
$N=500$ &$d=-0.2$ &$d=0$ &$d=0.2$ &$d=0.4$  &$d=0.6$ &$d=0.8$ &$d=1$ &$d=1.2$\\
\hline \hline
ARFIMA(0,d,0) &&&&&&&&\\
\hline \hline
$\sqrt{MSE}$ $\widetilde{d}_N^{(MIR)}$  &0.0911    &0.0968    &0.0988    &0.0949    &0.1018    &0.1022    &0.0973    &0.1055
\\
\hline
$\sqrt{MSE}$ $\widehat{d}_N^{(IR)}$   &0.1900    &0.2156    &0.2229    &0.2081    &0.2008    &0.1806    &0.1622    &0.1432
\\
\hline
$\sqrt{MSE}$ $\widetilde{d}_N^{(MS)}$   &0.1405    &0.1441    &0.1432    &0.1523    &0.1681    &0.1765    &0.1703    &0.1643
\\
\hline
$\sqrt{MSE}$ $\widehat{d}_N^{(ADG)}$    &0.0764    &0.0803    &0.0787    &0.0838    &0.0778    &0.0785    &0.0800    &0.0758
\\
\hline
$\sqrt{MSE}$ $\widetilde{d}_N^{(WAV)}$   &0.0716    &0.0795    &0.0849    &0.0865    &0.0808    &0.0848    &0.0701    &0.0707
\\
\hline \hline
ARFIMA(1,d,1) &&&&&&&&\\
\hline \hline
$\sqrt{MSE}$ $\widetilde{d}_N^{(MIR)}$  & 0.1527   & 0.1363   & 0.1315   & 0.1173   & 0.1212   & 0.1099   & 0.1129   & 0.1098
\\
\hline
$\sqrt{MSE}$ $\widehat{d}_N^{(IR)}$   & 0.2255   & 0.2328   & 0.2217   & 0.2205   & 0.2080   & 0.1773   & 0.1592   & 0.1353
\\
\hline
$\sqrt{MSE}$ $\widetilde{d}_N^{(MS)}$   & 0.1393   & 0.1448   & 0.1446   & 0.1521   & 0.1668   & 0.1744   & 0.1688   & 0.1625
\\
\hline
$\sqrt{MSE}$ $\widehat{d}_N^{(ADG)}$ & 0.0939    & 0.0914    & 0.0925    &0.1012   & 0.0933   & 0.0887 & 0.0897   & 0.0872
\\
\hline
$\sqrt{MSE}$ $\widetilde{d}_N^{(WAV)}$    & 0.1728   & 0.1625   & 0.1591   & 0.1424   & 0.1373   & 0.1210   & 0.1026   & 0.0922
\\
\hline \hline
$X^{(d,1,0.1)}$ &&&&&&&&\\
\hline \hline
$\sqrt{MSE}$ $\widetilde{d}_N^{(MIR)}$  & 0.0892  &  0.1003  &  0.1010   & 0.1093&    0.1168 &   0.1126&    0.1158   & 0.1271
\\
\hline
$\sqrt{MSE}$ $\widehat{d}_N^{(IR)}$   & 0.1803   & 0.2081&    0.2186  &  0.2062 &   0.2043  &  0.1840&    0.1700 &   0.1569\\
\hline
$\sqrt{MSE}$ $\widetilde{d}_N^{(MS)}$   & 0.1418  &  0.1438  &  0.1425  &  0.1472 &   0.1538  &  0.1680 &   0.1677  &  0.1697\\
\hline
$\sqrt{MSE}$ $\widehat{d}_N^{(ADG)}$   & 0.0808  &  0.0836 &   0.0804  &  0.0864    &0.0817   & 0.0812&    0.0842 &   0.0817
\\
\hline
$\sqrt{MSE}$ $\widetilde{d}_N^{(WAV)}$   &0.0954  &  0.0871  &  0.0891  &  0.0856  &  0.0772  &  0.0757&    0.0798  &  0.0856\\
\hline \hline
$X^{(d,0,1)}$ &&&&&&&&\\
\hline \hline
$\sqrt{MSE}$ $\widetilde{d}_N^{(MIR)}$  &0.0915    &0.0950    &0.0962   &0.1018    &0.1043    &0.1111   &0.1104    &0.1205
\\
\hline
$\sqrt{MSE}$ $\widehat{d}_N^{(IR)}$   &0.1839    &0.2141    &0.2094   &0.2179    &0.2010    &0.1827   &0.1625  &0.1411\\
\hline
$\sqrt{MSE}$ $\widetilde{d}_N^{(MS)}$   &0.1393   &0.1437   &0.1446   &0.1447   &0.1524    &0.1709    &0.1721    &0.1708\\
\hline
$\sqrt{MSE}$ $\widehat{d}_N^{(ADG)}$   &0.0746   &0.0790  &0.0750   &0.0808    &0.0778   &0.0779    &0.0754    &0.0780
\\
\hline
$\sqrt{MSE}$ $\widetilde{d}_N^{(WAV)}$   &0.0756    &0.0786   &0.0767   &0.0750  &0.0667   &0.0724   &0.0789    &0.0846\\
\hline \hline
$X^{(d,\log)}$ &&&&&&&&\\
\hline \hline
$\sqrt{MSE}$ $\widetilde{d}_N^{(MIR)}$  &0.0836   &0.1064    &0.1089    &0.1161    &0.1138    &0.1197    &0.1252    &0.1380
\\
\hline
$\sqrt{MSE}$ $\widetilde{d}_N^{(IR)}$   &0.1810   &0.2100    &0.2089   &0.2009   &0.1853   &0.1819    &0.1666  &0.1542\\
\hline
$\sqrt{MSE}$ $\widetilde{d}_N^{(MS)}$   &0.1500   &0.1529    &0.1564    &0.1677    &0.1649    &0.1654    &0.1660    &0.1578\\
\hline
$\sqrt{MSE}$ $\widehat{d}_N^{(ADG)}$   &0.0822   &0.0864    &0.0844    &0.0901    &0.0827    &0.0797 &0.0852    &0.0846
\\
\hline
$\sqrt{MSE}$ $\widetilde{d}_N^{(WAV)}$   &0.0974   &0.1087    &0.0996    &0.1068    &0.1007    &0.1031    &0.0967    &0.0829\\
\hline \hline
$X^{(trend)}$ &&&&&&&&\\
\hline \hline
$\sqrt{MSE}$ $\widetilde{d}_N^{(MIR)}$  &0.4684   &0.2922    &0.1633    &0.1051    &0.1027    &0.1176    &0.1176    &0.1279
\\
\hline
$\sqrt{MSE}$ $\widetilde{d}_N^{(IR)}$   &0.2793   &0.2192    &0.2048  &0.2029    &0.1964    &0.1824   &0.1616    &0.1443\\
\hline
$\sqrt{MSE}$ $\widetilde{d}_N^{(MS)}$   &0.9077    &0.6067    &0.3444   &0.2150    &0.2024    &0.1994    &0.1683    &0.1471\\
\hline
$\sqrt{MSE}$ $\widehat{d}_N^{(ADG)}$   &0.5674  &0.3564    &0.1787    &0.1009    &0.0845    &0.0901    &0.0880    &0.0878
\\
\hline
$\sqrt{MSE}$ $\widetilde{d}_N^{(WAV)}$   &0.0961    &0.0908   &0.0886    &0.0913    &0.0917    &0.0907    &0.0804    &0.0896\\
\hline
\end{tabular}\\
~\\
\vspace{3mm}\begin{tabular}{|c|c|c|c|c||c|c|c|c|}
\hline\hline

\hline
\end{tabular}
\end{center}

\caption{{\small \label{Table2}: Comparison between $\widetilde{d}^{(MIR)}_N$  and other famous semiparametric estimators of $d$ ($\widetilde{d}_N^{(IR)}$, $\widetilde{d}_N^{(MS)}$, $\widehat{d}_N^{(ADG)}$  and $\widetilde{d}_N^{(WAV)}$) applied to fractionally integrated processes for $N=500$, several values of $d \in (-0.5,1.25)$ and $1000$ independent replications.}
}
}
\end{table}

\begin{table}[t]
{\footnotesize
\begin{center}

\vspace{3mm}
\begin{tabular}{|c|c|c|c|c||c|c|c|c|}
\hline\hline
$N=5000$ &$d=-0.2$ &$d=0$ &$d=0.2$ &$d=0.4$ &$d=.6$ &$d=0.8$ &$d=1$ &$d=1.2$\\
\hline\hline
ARFIMA(0,d,0) &&&&&&&&\\
\hline \hline
$\sqrt{MSE}$ $\widetilde{d}_N^{(MIR)}$   &0.0391    &0.0318    &0.0329    &0.0346    &0.0363    &0.0381    &0.0399    &0.0513
\\
\hline
$\sqrt{MSE}$ $\widetilde{d}_N^{(IR)}$   &0.0652   &0.0637    &0.0636    &0.0636    &0.0591    &0.0574   &0.0499    &0.0477\\
\hline
$\sqrt{MSE}$ $\widetilde{d}_N^{(MS)}$    &0.0428    &0.0434    &0.0425    &0.0447    &0.0483    &0.0587    &0.0447    &0.1419
\\
\hline
$\sqrt{MSE}$ $\widehat{d}_N^{(ADG)}$    &0.0326    &0.0323    &0.0324    &0.0341    &0.0341    &0.0334    &0.0333    &0.0327
\\
\hline
$\sqrt{MSE}$ $\widetilde{d}_N^{(WAV)}$   &0.0313    &0.0305    &0.0269    &0.0308    &0.0329    &0.0356    &0.0340    &0.0350
\\
\hline\hline
ARFIMA(1,d,1) &&&&&&&& \\
\hline \hline
$\sqrt{MSE}$ $\widetilde{d}_N^{(MIR)}$    &0.0756    &0.0666    &0.0605    &0.0551    &0.0518    &0.0514    &0.0557    &0.0585\\
\hline
$\sqrt{MSE}$ $\widetilde{d}_N^{(IR)}$   &0.1141   &0.0901    &0.0792    &0.0730    &0.0612    &0.0559    &0.0491    &0.0423\\
\hline
$\sqrt{MSE}$ $\widetilde{d}_N^{(MS)}$    &0.0425    &0.0437    &0.0428    &0.0449    &0.0481    &0.0584    &0.0444    &0.1417\\
\hline
$\sqrt{MSE}$ $\widehat{d}_N^{(ADG)}$  &0.0333    &0.0335    &0.0336    &0.0364    &0.0359    &0.0346    &0.0338    &0.0340\\
\hline
$\sqrt{MSE}$ $\widetilde{d}_N^{(WAV)}$    &0.0566    &0.0603    &0.0545    &0.0560    &0.0546    &0.0545    &0.0493    &0.0474\\
\hline\hline
$X^{(d,1,0.1)}$ &&&&&&&&\\
\hline \hline
$\sqrt{MSE}$ $\widetilde{d}_N^{(MIR)}$    & 0.0302  &  0.0401  &  0.0412   & 0.0465 &   0.0427  &  0.0444 &   0.0456  &  0.0490
\\
\hline
$\sqrt{MSE}$ $\widetilde{d}_N^{(IR)}$   & 0.0606  &  0.0678   & 0.0773   & 0.0740  &  0.0652 &   0.0568 &   0.0554 &   0.0472\\
\hline
$\sqrt{MSE}$ $\widetilde{d}_N^{(MS)}$ &0.0429  &  0.0483 &   0.0486  &  0.0502   & 0.0447   & 0.0523  &  0.0458 &   0.1322
\\
\hline
$\sqrt{MSE}$ $\widehat{d}_N^{(ADG)}$   &0.0390  &  0.0410   & 0.0400   & 0.0395   & 0.0357    &0.0378   & 0.0404    &0.0363
\\
\hline
$\sqrt{MSE}$ $\widetilde{d}_N^{(WAV)}$   &0.0363 &   0.0393 &   0.0406 &   0.0375 &   0.0340  &  0.0408&    0.0406    &0.0444
\\
\hline \hline
$X^{(d,0,1)}$ &&&&&&&&\\
\hline \hline
$\sqrt{MSE}$ $\widetilde{d}_N^{(MIR)}$  &0.0330    &0.0297    &0.0314    &0.0320    &0.0319    &0.0315    &0.0339    &0.0395
\\
\hline
$\sqrt{MSE}$ $\widetilde{d}_N^{(IR)}$   &0.0642  &0.0652    &0.0693    &0.0633    &0.0630    &0.0560    &0.0478   &0.0428\\
\hline
$\sqrt{MSE}$ $\widetilde{d}_N^{(MS)}$   &0.0432    &0.0422    &0.0461    &0.0434    &0.0489    &0.0547    &0.0437   &0.1263\\
\hline
$\sqrt{MSE}$ $\widehat{d}_N^{(ADG)}$   &0.0318    &0.0318    &0.0322    &0.0345    &0.0361    &0.0321    &0.0327    &0.0326
\\
\hline
$\sqrt{MSE}$ $\widetilde{d}_N^{(WAV)}$   &0.0271    &0.0298    &0.0248    &0.0319    &0.0348    &0.0331    &0.0384    &0.0402\\
\hline \hline
$X^{(d,\log)}$ &&&&&&&&\\
\hline \hline
$\sqrt{MSE}$ $\widetilde{d}_N^{(MIR)}$  &0.0370 &0.0345    &0.0383    &0.0461  &0.0461   &0.0519    &0.0555  &0.0608
\\
\hline
$\sqrt{MSE}$ $\widetilde{d}_N^{(IR)}$   &0.0627    &0.0683    &0.0676   &0.0659    &0.0587 &0.0528   &0.0490  &0.0501\\
\hline
$\sqrt{MSE}$ $\widetilde{d}_N^{(MS)}$   &0.0582  &0.0615    &0.0632    &0.0640    &0.0636    &0.0584   &0.0552    &0.1300\\
\hline
$\sqrt{MSE}$ $\widehat{d}_N^{(ADG)}$   &0.0417 &0.0427  &0.0414    &0.0411    &0.0401    &0.0405 &0.0415    &0.0403
\\
\hline
$\sqrt{MSE}$ $\widetilde{d}_N^{(WAV)}$   &0.0604   &0.0618    &0.0589    &0.0609    &0.0632   &0.0600    &0.0609  &0.0669\\
\hline \hline
$X^{(trend)}$ &&&&&&&&\\
\hline \hline
$\sqrt{MSE}$ $\widetilde{d}_N^{(MIR)}$  &0.0720   &0.0372   &0.0349    &0.0363    &0.0363  &0.0380 &0.0450    &0.0864
\\
\hline
$\sqrt{MSE}$ $\widetilde{d}_N^{(IR)}$   &0.0677    &0.0639    &0.0690    &0.0655    &0.0602    &0.0545    &0.0485    &0.0506\\
\hline
$\sqrt{MSE}$ $\widetilde{d}_N^{(MS)}$   &0.7760    &0.6067    &0.1480   &0.0675    &0.0680   &0.0750 &0.0443   &0.1512\\
\hline
$\sqrt{MSE}$ $\widehat{d}_N^{(ADG)}$   &0.6019 &0.3613   &0.1502   &0.0555    &0.0387    &0.0377    &0.0369    &0.0364
\\
\hline
$\sqrt{MSE}$ $\widetilde{d}_N^{(WAV)}$   &0.4988   &0.0623   &0.0389    &0.0344    &0.0362    &0.0402    &0.0422    &0.0444\\
\hline
\end{tabular}
\end{center}
}
\caption{{\small \label{Table3}: Comparison between $\widetilde{d}^{(MIR)}_N$  and other famous semiparametric estimators of $d$ ($\widetilde{d}_N^{(IR)}$, $\widetilde{d}_N^{(MS)}$, $\widehat{d}_N^{(ADG)}$  and $\widetilde{d}_N^{(WAV)}$) applied to fractionally integrated processes for $N=5000$, several values of $d \in (-0.5,1.25)$ and $1000$ independent  replications.}
}
\end{table}
\begin{table}[t]
{\footnotesize
\begin{center}

\vspace{3mm}
\begin{tabular}{|c|c|c|c|c||c|c|c|c|}
\hline\hline
$N=50000$ &$d=-0.2$ &$d=0$ &$d=0.2$ &$d=0.4$ &$d=.6$ &$d=0.8$ &$d=1$ &$d=1.2$\\
\hline\hline
ARFIMA(0,d,0) &&&&&&&&\\
\hline \hline
$\sqrt{MSE}$ $\widetilde{d}_N^{(MIR)}$   &0.0201   &0.0081  &0.0132    &0.0141   &0.0139  &0.0150 &0.0128   &0.0294
\\
\hline
$\sqrt{MSE}$ $\widetilde{d}_N^{(IR)}$   &0.0248  &0.0219    &0.0218    &0.0216    &0.0191    &0.0203   &0.0138    &0.0144\\
\hline
$\sqrt{MSE}$ $\widetilde{d}_N^{(MS)}$    &0.0151   &0.0161   &0.0150    &0.0127    &0.0178    &0.0217    &0.0127   &0.1595
\\
\hline
$\sqrt{MSE}$ $\widehat{d}_N^{(ADG)}$    &0.0143    &0.0150    &0.0160    &0.0134    &0.0159   &0.0133    &0.0139   &0.0149
\\
\hline
$\sqrt{MSE}$ $\widetilde{d}_N^{(WAV)}$   &0.0102    &0.0079    &0.0086   &0.0102    &0.0107    &0.0196    &0.0183  &0.0253
\\
\hline\hline
ARFIMA(1,d,1) &&&&&&&& \\
\hline \hline
$\sqrt{MSE}$ $\widetilde{d}_N^{(MIR)}$    &0.0440    &0.0278    &0.0247    &0.0232   &0.0185    &0.0233    &0.0198   &0.0326\\
\hline
$\sqrt{MSE}$ $\widetilde{d}_N^{(IR)}$   &0.0906   &0.0658    &0.0479    &0.0355    &0.0298    &0.0230   &0.0194    &0.0163\\
\hline
$\sqrt{MSE}$ $\widetilde{d}_N^{(MS)}$    &0.0146    &0.0125    &0.0142    &0.0163    &0.0179    &0.0257    &0.0141   &0.1564\\
\hline
$\sqrt{MSE}$ $\widehat{d}_N^{(ADG)}$  &0.0160   &0.0137   &0.0144    &0.0160    &0.0156    &0.0158    &0.0154    &0.0138\\
\hline
$\sqrt{MSE}$ $\widetilde{d}_N^{(WAV)}$    &0.0233   &0.0252    &0.0268    &0.0210   &0.0179  &0.0257    &0.0254    &0.0319\\
\hline\hline
$X^{(d,1,0.1)}$ &&&&&&&&\\
\hline \hline
$\sqrt{MSE}$ $\widetilde{d}_N^{(MIR)}$    &0.0093  &  0.0243    &0.0268  &  0.0273  &  0.0280   & 0.0265    &0.0249 &   0.0224
\\
\hline
$\sqrt{MSE}$ $\widetilde{d}_N^{(IR)}$   &0.0182  &  0.0330 &   0.0349 &   0.0342  &  0.0335   & 0.0316   & 0.0258  &  0.0267\\
\hline
$\sqrt{MSE}$ $\widetilde{d}_N^{(MS)}$ &0.0244  &  0.0293   & 0.0267  &  0.0276  &  0.0251    &0.0216&    0.0216 &   0.1375
\\
\hline
$\sqrt{MSE}$ $\widehat{d}_N^{(ADG)}$   & 0.0243  &  0.0283   & 0.0257 &   0.0265 &   0.0230   & 0.0248  &  0.0244  &  0.0253
\\
\hline
$\sqrt{MSE}$ $\widetilde{d}_N^{(WAV)}$   &0.0232  &  0.0290  &  0.0273   & 0.0397  &  0.0290  &  0.0281 &   0.0228 &   0.0318
\\
\hline \hline
$X^{(d,0,1)}$ &&&&&&&&\\
\hline \hline
$\sqrt{MSE}$ $\widetilde{d}_N^{(MIR)}$  &0.0181    &0.089    &0.0107   &0.0110    &0.0108    &0.0125    &0.0115    &0.0121
\\
\hline
$\sqrt{MSE}$ $\widetilde{d}_N^{(IR)}$   &0.0273   &0.0205    &0.0236    &0.0215    &0.0221    &0.0159   &0.0147    &0.0131\\
\hline
$\sqrt{MSE}$ $\widetilde{d}_N^{(MS)}$   &0.0140   &0.0154    &0.0151    &0.0166    &0.0167    &0.0227    &0.0159    &0.1337\\
\hline
$\sqrt{MSE}$ $\widehat{d}_N^{(ADG)}$   &0.0148    &0.0165    &0.0167    &0.0177    &0.0146    &0.0145    &0.0161   &0.0154
\\
\hline
$\sqrt{MSE}$ $\widetilde{d}_N^{(WAV)}$   &0.0099    &0.0167    &0.0135    &0.0156    &0.0189  &0.0148   &0.0283    &0.0268\\
\hline \hline
$X^{(d,\log)}$ &&&&&&&&\\
\hline \hline
$\sqrt{MSE}$ $\widetilde{d}_N^{(MIR)}$  &0.0193   &0.0240    &0.0287    &0.0312   &0.0382    &0.0390    &0.0419    &0.0472
\\
\hline
$\sqrt{MSE}$ $\widetilde{d}_N^{(IR)}$   &0.0300 &0.0256    &0.0282    &0.0294    &0.0210   &0.0191  &0.0244    &0.0305\\
\hline
$\sqrt{MSE}$ $\widetilde{d}_N^{(MS)}$   &0.0463  &0.0498    &0.0480    &0.0504    &0.0478    &0.0408   &0.0418    &0.1480\\
\hline
$\sqrt{MSE}$ $\widehat{d}_N^{(ADG)}$   &0.0456 &0.0475  &0.0464   &0.0464    &0.0438    &0.0456 &0.0468    &0.0453
\\
\hline
$\sqrt{MSE}$ $\widetilde{d}_N^{(WAV)}$   &0.0529   &0.0515    &0.0509    &0.0524    &0.0465   &0.0468    &0.0544  &0.0498\\
\hline \hline
$X^{(trend)}$ &&&&&&&&\\
\hline \hline
$\sqrt{MSE}$ $\widetilde{d}_N^{(MIR)}$  &0.0271   &0.0097   &0.0127    &0.0130    &0.0132  &0.0132 &0.0126    &0.0562
\\
\hline
$\sqrt{MSE}$ $\widetilde{d}_N^{(IR)}$   &0.0282   &0.0228   &0.0226    &0.0211    &0.0199    &0.0160   &0.0165    &0.0194\\
\hline
$\sqrt{MSE}$ $\widetilde{d}_N^{(MS)}$   &0.9840   &0.6253    &0.1134  &0.0194    &0.0224   &0.0395 &0.0117   &0.1655\\
\hline
$\sqrt{MSE}$ $\widehat{d}_N^{(ADG)}$   &0.6190 &0.3616   &0.1356   &0.0209   &0.0158    &0.0156    &0.0153    &0.0155
\\
\hline
$\sqrt{MSE}$ $\widetilde{d}_N^{(WAV)}$   &1.0023   &0.5575  &0.0386    &0.0182    &0.0181    &0.0253    &0.0474    &0.0275\\
\hline
\end{tabular}
\end{center}
}
\caption{{\small \label{Table4}: Comparison between $\widetilde{d}^{(MIR)}_N$  and other famous semiparametric estimators of $d$ ($\widetilde{d}_N^{(IR)}$, $\widetilde{d}_N^{(MS)}$, $\widehat{d}_N^{(ADG)}$  and $\widetilde{d}_N^{(WAV)}$) applied to fractionally integrated processes for $N=50000$, several values of $d \in (-0.5,1.25)$ and $1000$ independent replications.}
}
\end{table}
~\\
\underline{{\bf Conclusions of simulations:}} Even if the estimator  $\widehat{d}_N^{(ADG)}$ often provides the more accurate estimation of $d$ for stationary processes, it is not more accurate anymore than $\widehat{d}_N^{(MIR)}$ in case of trended time series. Moreover since this is not a data-driven estimator, with a bandwidth $m$ fixed to be $N^{0.65}$, it is not a consistent estimator when $\beta$ is small enough: this is such the case for $X^{(d,\log)}$ where we observe that the MSE is globally larger for $N=50000$ than for $N=5000$. The estimator $\widehat{d}_N^{(MIR)}$ is a very good trade-off with always one of the  smallest $\sqrt{MSE}$ among the $5$ semiparametric estimators and almost never bad results (except perhaps for $X^{(trend)}$, $N=500$ and $d<0.5$). Moreover, the larger $N$ the more efficient $\widehat{d}_N^{(MIR)}$ with respect to the other estimators. 
Note also that the use of a data-driven multidimensional version of $IR$ statistics ({\it i.e.} the estimator $\widehat{d}_N^{(MIR)}$) considerably improves the quality of the estimation with respect to the original estimator based on unidimensional IR statistics (the estimator $\widehat{d}_N^{(IR)}$). Finally the other data-driven estimators $ d^{(MS)}$ and $d^{(WAV)}$ provide correct results but are often less efficient than $\widehat{d}_N^{(MIR)}$.

\subsubsection{Comparison of MIR tests $\widetilde S_N$ and $\widetilde T_N$  with other stationarity or non-stationarity tests}
Monte-Carlo experiments were done for evaluating the performances of new tests $\widetilde S_N$ and $\widetilde T_N$ and for comparing them to most famous stationarity tests (KPSS and V/S) or non-stationarity (ADF and PP) tests (see more details on these tests in the previous section). \\
As it is suggested for the corresponding  R-software commands (see also Banerjee {\it et al.}, 1993), we chose the following trimming parameters for the classical tests:
\begin{itemize}
\item $k=\Big [\frac 3 {13}\, \sqrt n \Big ]$ for KPSS test;
\item $k=\sqrt N$ for V/S test;
\item $k=\Big [(N-1)^{1/3}\Big ]$ for ADF test;
\item  $k=\Big [4\, \big (\frac N {100} \big )^{1/4}\Big ]$ for PP test;
\end{itemize}
The results of these simulations  with a type I error classically chosen to $0.05$ are provided in Tables \ref{Table5}, \ref{Table6}, \ref{Table7} and \ref{Table8}.
\\

\begin{table}
{\footnotesize
\begin{center}
\begin{tabular}{|c|c|c|c|c|c||c|c|c|c|c|}
\hline\hline
 $N=500$  &$d=0$ & $d=0$ &  $d=0$ & $d=0$ & $d=0$  &$d=1$ & $d=1$  & $d=1$ & $d=1$  & $d=1$ \\
ARIMA$(1,d,0)$ &$\phi$=-0.1 &$\phi$=-0.3 &$\phi$=-0.5 &$\phi$=-0.7 &$\phi$=-0.9 &$\phi$=-0.1 &$\phi$=-0.3  &$\phi$=-0.5  &$\phi$=-0.7 &$\phi$=-0.9 ~ \\
\hline \hline
$\widetilde S_N$: Rejected $H_{0}$   &0  &0   &0    &0   &0.508 &  0.992   &0.992 & 0.993 & 0.995 &  1.000\\
$KPSS$: Rejected $H_{0}$   &0.058    &0.091    &0.125  &0.228  &0.679    &0.998 & 0.998 & 0.999 & 1.000 &  1.000
\\
$V/S$ : Rejected $H_{0}$  &0.057   & 0.071 &   0.105 &   0.207  &  0.680   & 0.997  &  0.998  &  0.999  &  1.000  &  1.000
\\
\hline
\hline
$\widetilde T_N$: Rejected $H'_{0}$  & 0.998 & 0.995  &0.990    &0.845    &0.074         &0         &0         &0 & 0 &0
\\
ADF: Rejected $H'_{0}$    &1.000    &1.000    &1.000  & 1.000 & 1.000  &0.048    &0.043    &0.043 & 0.041 & 0.049 \\

PP : Rejected $H'_{0}$    &1.000    &1.000    &1.000  &1.000    &1.000   &0.040   &0.032    &0.017 & 0.012 & 0\\

\hline
\hline
\end{tabular}\\
~\\
\vspace{3mm}
\begin{tabular}{|c|c|c|c|c|c||c|c|c|c|c|}
\hline\hline
 $N=5000$  &$d=0$ & $d=0$ &  $d=0$ & $d=0$ & $d=0$  &$d=1$ & $d=1$  & $d=1$ & $d=1$  & $d=1$ \\
ARIMA$(1,d,0)$ &$\phi$=-0.1 &$\phi$=-0.3 &$\phi$=-0.5 &$\phi$=-0.7 &$\phi$=-0.9 &$\phi$=-0.1 &$\phi$=-0.3  &$\phi$=-0.5  &$\phi$=-0.7 &$\phi$=-0.9 ~ \\
\hline \hline
$\widetilde S_N$: Rejected $H_{0}$   &0   &0   &0         &0         &0.118         &1.000 & 1.000 & 1.000 & 1.000 & 1.000 \\
$KPSS$: Rejected $H_{0}$ &0. 044  &0.045    &0.084    & 0.078 &   0.306  &1.000    & 1.000        &1.000 & 1.000 & 1.000
\\
$V/S$ : Rejected $H_{0}$  &0.053  &  0.053 &   0.063  &  0.088  &  0.295   & 1.000 &   1.000 &   1.000 &   1.000&    1.000 \\
\hline
\hline
$\widetilde T_N$: Rejected $H'_{0}$ & 1.000    &1.000 & 1.000    &1.000    &0.870        & 0        &0         &0 & 0 & 0
\\
ADF: Rejected $H'_{0}$  &1.000    &1.000    &1.000 &1.000    &1.000    &0.034    &0.051   &0.042  & 0.044 & 0.068 \\

PP : Rejected $H'_{0}$ &1.000    &1.000  &1.000    &1.000    &1.000    &0.029   &0.058    &0.031 & 0.024 & 0.008
\\
\hline
\hline
\end{tabular}
~\\
\vspace{1cm}
\begin{tabular}{|c|c|c|c|c|c||c|c|c|c|c|}
\hline\hline
 $N=500$  &$d=0$ & $d=0$ &  $d=0$ & $d=0$ & $d=0$  &$d=1$ & $d=1$  & $d=1$ & $d=1$  & $d=1$ \\
ARIMA$(1,d,0)$ &$\phi$=0.1~ &$\phi$=0.3~ &$\phi$=0.5~ &$\phi$=0.7~ &$\phi$=0.9 ~&$\phi$=0.1~ &$\phi$=0.3~  &$\phi$=0.5~  &$\phi$=0.7 ~&$\phi$=0.9 ~ \\
\hline \hline
$\widetilde S_N$: Rejected $H_{0}$   &0  &0   &0    &0   &0 &  0.990   &0.995 & 0.994 & 0.995 &  0.995\\
$KPSS$: Rejected $H_{0}$   &0.040   &0.029    &0.025  &0.010  &0.007   &0.998 & 0.998 & 0.997 & 0.998 & 0.999
\\
$V/S$ : Rejected $H_{0}$  & 0.043  &  0.030  &  0.018 &   0.012   & 0.006    &1.000    &0.999  &  1.000&    1.000 &   0.999
\\
\hline
\hline
$\widetilde T_N$: Rejected $H'_{0}$  & 0.998 & 1.000  &0.999    &1.000    &1.000         &0         &0         &0 & 0 &0
\\
ADF: Rejected $H'_{0}$    &1.000    &1.000    &1.000  & 1.000 & 1.000  &0.040    &0.048    &0.038 & 0.040 & 0.055 \\

PP : Rejected $H'_{0}$    &1.000    &1.000    &1.000  &1.000    &1.000   &0.041   &0.074   &0.108 & 0.226& 0.534\\

\hline
\hline
\end{tabular}\\
~\\
\vspace{3mm}
\begin{tabular}{|c|c|c|c|c|c||c|c|c|c|c|}
\hline\hline
 $N=5000$  &$d=0$ & $d=0$ &  $d=0$ & $d=0$ & $d=0$  &$d=1$ & $d=1$  & $d=1$ & $d=1$  & $d=1$ \\
ARIMA$(1,d,0)$ &$\phi$=0.1~ &$\phi$=0.3~ &$\phi$=0.5~ &$\phi$=0.7~ &$\phi$=0.9 ~&$\phi$=0.1~ &$\phi$=0.3~  &$\phi$=0.5~  &$\phi$=0.7 ~&$\phi$=0.9 ~ \\
\hline \hline
$\widetilde S_N$: Rejected $H_{0}$   & 0   &  0 &    0&     0 &    0  &  1.000    &1.000 &   1.000  &  1.000 &   1.000 \\
$KPSS$: Rejected $H_{0}$ &0.087 &   0.044 &   0.041  &  0.016  &  0.008   & 1.000 &   1.000  &  1.000 &   1.000 &   1.000
\\
$V/S$ : Rejected $H_{0}$  & 0.068 &   0.035   & 0.044  &  0.019 &   0.003   & 1.000    &1.000 &   1.000  &  1.000 &   1.000\\
\hline
\hline
$\widetilde T_N$: Rejected $H'_{0}$ & 1.000    &1.000 &   1.000  &  1.000 &   1.000&     0   &  0  &   0   &  0  &   0
\\
ADF: Rejected $H'_{0}$  &  1.000  &  1.000   & 1.000 &   1.000 &   1.000 &   0.025 &   0.049   & 0.030   & 0.074    &0.041 \\

PP : Rejected $H'_{0}$ & 1.000  &  1.000   & 1.000  &  1.000  &  1.000   & 0.033  &  0.057  &  0.052  &  0.144   & 0.352
\\
\hline
\hline
\end{tabular}
\end{center}
}
\caption{{\small \label{Table5} Comparisons of stationarity and non-stationarity tests from $1000$ independent Monte Carlo experiment replications of ARIMA$(1,d,0)$ processes (defined by $X_t+\phi X_{t-1}=\varepsilon_t$ for $d=0$ and $(X_t-X_{t-1})+\phi (X_{t-1}-X_{t-2})=\varepsilon_t$ for $d=1$) for  several values of $\phi$ and $N$. The accuracy of tests is measured by the rejection  probabilities.}
}
\end{table}

\begin{table}
{\footnotesize
\begin{center}
\begin{tabular}{|c|c|c|c|c||c|c|c|c|}
\hline\hline
$N=500$ &&&&&&&&\\
ARFIMA$(0,d,0)$ &$d=-0.2$ &$d=0$ &$d=0.2$ &$d=0.4$ &$d=0.6$ &$d=0.8$ &$d=1$ &$d=1.2$ \\
\hline \hline
$\widetilde S_N$: Rejected  $H_{0}$  & 0      &   0   &      0  &  0.003 &   0.276  &  0.917    &0.998   & 0.999   \\
$KPSS$: Rejected  $H_{0}$ &0 &   0.059 &   0.395   & 0.771  &  0.946    &0.989   & 0.999   & 0.999  \\
$V/S$ : Rejected  $H_{0}$ & 0   & 0.052  &  0.446 &   0.847 &   0.970 &   0.993  &  0.998    &1.000 \\
\hline\hline
$\widetilde T_N$: Rejected $H'_{0}$  & 1.000    &1.000  &  0.965 &   0.421  &  0.017   &      0    &     0   &      0\\
ADF: Rejected $H'_{0}$  & 1.000   & 1.000  &  1.000   & 0.977  &  0.615  &  0.233 &   0.065  &  0.005   \\
PP : Rejected $H'_{0}$ &1.000 &   1.000   & 1.000  &  1.000  &  0.919  &  0.447 &   0.065 &   0.002  \\
\hline
\hline
\end{tabular}\\
~\\
\vspace{3mm}
\begin{tabular}{|c|c|c|c|c||c|c|c|c|}
\hline\hline
$N=5000$ &&&&&&&&\\
ARFIMA$(0,d,0)$ &$d=-0.2$ &$d=0$ &$d=0.2$ &$d=0.4$ &$d=0.6$ &$d=0.8$ &$d=1$ &$d=1.2$ \\
\hline \hline
$\widetilde S_N$: Rejected $H_{0}$  & 0     &    0   &      0   &      0   & 0.912   & 1.000 &   1.000 &   1.000 \\
$KPSS$: Rejected  $H_{0}$ & 0  &  0.042   & 0.674   & 0.996    &1.000  &  1.000   & 1.000   & 1.000 \\
$V/S$ : Rejected  $H_{0}$ &0  &  0.038  &  0.694  &  0.992  &  1.000   & 1.000  &  1.000  &  1.000\\
\hline\hline
$\widetilde T_N$: Rejected $H'_{0}$  & 1.000   & 1.000    &1.000  &  0.946       &  0      &   0    &     0       &  0\\
ADF: Rejected $H'_{0}$  &1.000  &  1.000  &  1.000  &  1.000  &  0.946   & 0.448 &   0.050    &0.004   \\
PP : Rejected $H'_{0}$  &1.000  &  1.000  &  1.000   & 1.000    &1.000  &  0.705 &   0.042  &   0 \\
\hline
\hline
\end{tabular}
\end{center}
}

\caption{{\small\label{Table6}Comparisons of stationarity and non-stationarity tests from $1000$ independent Monte Carlo experiment replications of ARFIMA$(0,d,0)$ processes for  several values of $d$ and $N$. The accuracy of tests is measured by the rejection probabilities.}
}
\end{table}

\begin{table}
{\footnotesize
\begin{center}
\begin{tabular}{|c|c|c|c|c||c|c|c|c|}
\hline\hline
$N=500$ &&&&&&&&\\
ARFIMA$(1,d,1)$ &$d=-0.2$ &$d=0$ &$d=0.2$ &$d=0.4$ &$d=0.6$ &$d=0.8$ &$d=1$ &$d=1.2$ \\
$\phi=-0.3$ ; $\theta=0.7$ &&&&&&&&\\
\hline \hline
$\widetilde S_N$: Rejected  $H_{0}$  & 0     &    0   &      0   & 0.015   & 0.442   & 0.898 &   0.987 &   0.999  \\
$KPSS$: Rejected  $H_{0}$ & 0  &  0.079  &  0.454  &  0.836  &  0.959   & 0.995  &  0.997  &  0.999 \\
$V/S$ : Rejected $H_{0}$ & 0.001  &  0.077   & 0.481  &  0.876 &   0.974  &  0.993 &   1.000 &   1.000\\
\hline\hline
$\widetilde T_{N}$: Rejected $H'_{0}$ & 0.999   & 0.990   & 0.823  &  0.212  &  0.009     &    0    &     0     &    0
\\
ADF: Rejected $H'_{0}$ &1.000 &   1.000 &   1.000 &   0.961    &0.623    &0.230   & 0.056   & 0.010 \\
PP : Rejected $H'_{0}$ & 1.000  &  1.000  &  1.000  &  0.999   & 0.781  &  0.270 &   0.036     &    0 \\
\hline
\hline
\end{tabular}\\
~\\
\vspace{3mm}
\begin{tabular}{|c|c|c|c|c||c|c|c|c|}
\hline\hline
$N=5000$ &&&&&&&&\\
ARFIMA$(1,d,1)$ &$d=-0.2$ &$d=0$ &$d=0.2$ &$d=0.4$ &$d=0.6$ &$d=0.8$ &$d=1$ &$d=1.2$ \\
$\phi=-0.3$ ; $\theta=0.7$ &&&&&&&&\\
\hline \hline
$\widetilde S_N$: Rejected $H_{0}$  &0     &    0   &      0  &  0.004   & 0.846   & 1.000   & 1.000    &1.000 \\
$KPSS$: Rejected  $H_{0}$ &0  &  0.060   & 0.689    &0.963  &  0.996  &  1.000 &   1.000  &  1.000  \\
$V/S$ : Rejected $H_{0}$  & 0  &  0.060 &   0.697 &   0.989  &  1.000 &   1.000 &   1.000  &  1.000\\
\hline\hline
$\widetilde T_{N}$: Rejected $H'_{0}$ &1.000   & 1.000  &  1.000   & 0.700  &  0.008  &       0   &      0    &     0
\\
ADF: Rejected $H'_{0}$  &1.000   & 1.000 &   1.000  &  1.000   & 0.951 &   0.371  &  0.052  &  0.004  \\
PP : Rejected $H'_{0}$ & 1.000 &   1.000  &  1.000   & 1.000    &0.996  &  0.584&    0.038   &  0  \\
\hline
\hline
\end{tabular}
\end{center}
}
\caption{{\small \label{Table7} Comparisons of stationarity and non-stationarity tests from $1000$ independent Monte Carlo experiment replications of ARFIMA$(1,d,1)$ processes with $\phi=-0.3$ and $\theta=0.7$  for  several values of $d$ and $N$. The accuracy of tests is measured by the rejection probabilities.}
}
\end{table}

\begin{table}
{\footnotesize
\begin{center}
\begin{tabular}{|c|c|c|c|c||c|c|c|c|}
\hline\hline
$N=500$ &&&&&&&&\\
$X^{(d,0,1)}$ &$d=-0.2$ &$d=0$ &$d=0.2$ &$d=0.4$ &$d=0.6$ &$d=0.8$ &$d=1$ &$d=1.2$ \\
\hline \hline
$\widetilde S_N$: Rejected  $H_{0}$  &0      &   0     &    0  &  0.001  &  0.294  &  0.883  &  0.990   & 0.999 \\
$KPSS$: Rejected  $H_{0}$ &0  &  0.052  &  0.433   & 0.801   & 0.939  &  0.988   & 0.999   & 1.000  \\
$V/S$ : Rejected  $H_{0}$ & 0  &  0.035  &  0.464  &  0.844  &  0.963 &   0.995 &   0.999  &  1.000 \\
\hline\hline
$\widetilde T_{N}$: Rejected $H'_{0}$ & 1.000  &  1.000  &  0.953  &  0.405  &  0.022    &     0   &      0   &      0\\
ADF: Rejected $H'_{0}$    &  1.000   & 1.000    &1.000   & 1.000 &   0.976   & 0.561   & 0.188   & 0.073  \\
PP : Rejected $H'_{0}$    &  1.000   & 1.000  &  1.000   & 1.000   & 1.000   & 0.803  &  0.184 &   0.049 \\
\hline
\hline
\end{tabular}\\
~\\
\vspace{3mm}
\begin{tabular}{|c|c|c|c|c||c|c|c|c|}
\hline\hline
$N=5000$ &&&&&&&&\\
$X^{(d,0,1)}$ &$d=-0.2$ &$d=0$ &$d=0.2$ &$d=0.4$ &$d=0.6$ &$d=0.8$ &$d=1$ &$d=1.2$ \\
\hline \hline
$\widetilde S_N$: Rejected  $H_{0}$   & 0  &   0 &    0  &   0  &0.933  &1.000 & 1.000  &  1.000  \\
$KPSS$: Rejected $H_{0}$  & 0   & 0.082   & 0.689   & 0.970   & 1.000   & 1.000   & 1.000  &  1.000 \\
$V/S$ : Rejected  $H_{0}$  & 0  &  0.075    &0.723   & 0.970  &  0.996  &  1.000  &  1.000   & 1.000 \\
\hline\hline
$\widetilde T_N$: Rejected $H'_{0}$  & 1.000 &   1.000 &   1.000   & 0.940    &     0    &     0    &     0      &   0
\\
ADF: Rejected $H'_{0}$   &1.000  &  1.000  &  1.000  &  1.000  &  1.000   & 0.753  &  0.124   & 0.086  \\
PP : Rejected $H'_{0}$  &1.000  &  1.000   & 1.000 &   1.000   & 1.000   & 0.918   & 0.109   & 0.139  \\
\hline
\hline
\end{tabular}
\end{center}
}
\caption{{ \small \label{Table8}
Comparisons of stationarity and non-stationarity tests from $1000$ independent Monte Carlo experiment replications of $X^{(d,0,1)}$ processes for  several values of $d$ and $N$.  The accuracy of tests is measured by the rejection  probabilities.}
}
\end{table}

~\\
\underline{{\bf Conclusions of simulations:}} As it is well known, from their constructions, KPSS , V/S, ADF and PP tests should asymptotically decide the stationarity hypothesis when $d=0$, and the non-stationarity hypothesis when $d>0$. It was exactly what we observe in these simulations. For ARIMA$(p,d,0)$ processes with $d=0$ or $d=1$ ({\it i.e.} AR$(1)$ process when $d=0$), ADF and PP  tests are more efficient tests than our adaptive MIR tests when $N=500$. However, note that all stationarity tests do not control the size for $\phi=-0.9$. But when $N=5000$ the tests computed from $\widetilde d_N^{(MIR)}$ provide comparable and convincing results. Note also that KPSS and V/S provide reasonable results but less efficient than the other tests.
In case of processes with $d\in (0,1)$, the tests computed from $\widetilde d_N^{(MIR)}$ obtain clearly better performances than classical non-stationarity tests ADF or PP which accept the non-stationarity assumption $H_0'$ even if the processes are stationary when $0<d<0.5$ for instance. The case of the V/S test is different since this test is built for distinguishing between short and long-memory processes. However, as it was already established in Giraitis {\it et al.} (2003), V/S test is slightly more accurate than KPSS for testing the stationarity. Note also that a renormalized version of this test has been defined in Giraitis {\it et al.} (2006) for taking into account the value of $d$.

\subsubsection{Comparison of MIR Fractional Unit Roots test $\widetilde F_N$ and Dolado {\it et al.}  and Lobato and Velasco Fractional Unit Roots tests}
Monte-Carlo experiments were also done for evaluating the performances of new Fractional Unit Root test $\widetilde F_N$ and for comparing it to the Fractional Unit Roots tests defined in Dolado {\it et al. } (2002) and in Lobato and Velasco (2007).
\begin{enumerate}
\item The student-type test statistic defined in Dolado, Gonzalo and Mayoral (2002) is such as:
\begin{equation}\label{TDGM}
\widehat T_{DGM}= \frac {\sum_{t=2}^N(X_t-X_{t-1})\Delta^{\widehat d_1} X_{t-1}}{ \Big ( \sum_{ t=2}^N \big (\Delta^{\widehat d_1} X_{t-1}\big )^2 \times \frac 1 N \sum_{t=2}^N \big (X_t-X_{t-1} - \widehat \phi \Delta^{\widehat d_1} X_{t-1}\big )^2   \Big )^{1/2} },
\end{equation}
with $\widehat \phi=\frac { \sum_{t=2}^N(X_t-X_{t-1})\Delta^{\widehat d_1} X_{t-1}} {   \sum_{ t=2}^N \big (\Delta^{\widehat d_1} X_{t-1}\big )^2} $ and $\widehat d_1$ is the minimum between an ordinary least square estimator of $d_1$ and $1-c$ with $c>0$ small enough (typically $c=0.02$). This is an extension in a fractional framework of the Dickey-Fuller test.\\
\item The efficient Wald test statistic defined in Lobato and Velasco (2007) is based on a two-step student test of a regression coefficient, {\it i.e.} $\widehat T_{LV}$ is the student test of the $(z_t)_t$ coefficient for the regression of $X_t-X_{t-1}$ onto variables $z_t-\widehat \alpha_1 z_{t-1}-\cdots-\widehat \alpha_p z_{t-p}$, $X_{t-1}-X_{t-2}, \cdots , X_{t-p}-X_{t-p-1}$ for $t=p+1,\cdots, n$, where $z_t$ is defined as
\begin{equation}\label{TLV}
z_t= \frac {\Delta^{\widehat d} X_{t}-(X_t-X_{t-1})}{1-\widehat d},
\end{equation}
and where $(\widehat \alpha_1,\cdots,\widehat \alpha_p)$ are obtained as a minimizer of $\sum_{k=p}^n \big (\Delta^{\widehat d} X_{t}-\alpha_1\Delta^{\widehat d} X_{t-1}-\cdots -\alpha_p\Delta^{\widehat d} X_{t-p}\big )^2$ and $\widehat d$ is a semi-parametric local Whittle type estimator of $d$. Note that $\widehat T_{LV}$ is depending on $p$ and in the sequel we will chose $p=1$ and $p=10$, defining respectively $\widehat T_{LV1}$ and $\widehat T_{LV10}$.
\end{enumerate}
We applied the fractional unit roots tests $\widetilde F_N$,  $\widehat T_{LV1}$ and $\widehat T_{LV10}$ to several fractional processes and displayed the results in Table \ref{Table9}. As it is a test specially devoted to FARIMA$(0,d,0)$ processes, we only applied the fractional unit roots test $\widehat T_{DGM}$ to those processes. Finally, note that we also consider the additional sample size $N=200$ to $N=500$ and $N=5000$ used in other simulations because this could help to better evaluate the power of the several tests (since for $N=500$ and $N=5000$ the test powers are often $1$).

\begin{table}
{\footnotesize
\begin{center}
\begin{tabular}{|c|c|c|c|c|c|c|}
\hline\hline
$N=200$ &&&&&&\\
ARFIMA$(0,d,0)$ &$d=0.5$ &$d=0.6$ &$d=0.7$ &$d=0.8$ &$d=0.9$ &$d=1$ \\
\hline \hline
$\widetilde F_N$: Rejected $H^{FUR}_{0}$  & 0.995 &   0.960&    0.819  &  0.577   & 0.292 &   0.132  \\
$\widehat T_{DGM}$: Rejected $H^{FUR}_{0}$  & 0.991   & 0.982 &   0.956 &   0.734 &   0.277  &  0.058
\\
$\widehat T_{LV1}$: Rejected $H^{FUR}_{0}$  &  0.999 &  0.998 &  0.932&    0.604&    0.185&   0.059
\\
$\widehat T_{LV10}$: Rejected $H^{FUR}_{0}$  &  0.414 &   0.276&   0.206&    0.128&    0.069&   0.044
\\
\hline
ARFIMA$(1,d,0)$ &&&&&& \\
\hline \hline
$\widetilde F_N$: Rejected $H^{FUR}_{0}$  & 0.998&   0.975 &   0.881&    0.653&  0.388&    0.136\\
$\widehat T_{LV1}$: Rejected $H^{FUR}_{0}$  &  1.000 &   1.000&  1.000&  0.992 &    0.851& 0.056
\\
$\widehat T_{LV10}$: Rejected $H^{FUR}_{0}$  &  0.433 &   0.308&   0.205&    0.118&    0.062&  0.043
\\
\hline
ARFIMA$(1,d,1)$ &&&&&& \\
\hline \hline
$\widetilde F_N$: Rejected $H^{FUR}_{0}$  & 0.961&   0.870&    0.654&    0.416&  0.183&    0.076\\
$\widehat T_{LV1}$: Rejected $H^{FUR}_{0}$  &  0.996 &   0.942&   0.520&    0.087&    0.099&  0.571
\\
$\widehat T_{LV10}$: Rejected $H^{FUR}_{0}$  &  0.353 &   0.277&   0.144&    0.095&    0.067& 0.044
\\
\hline $X^{(d,0,1)}$ &&&&&&\\
\hline \hline
$\widetilde F_N$: Rejected $H^{FUR}_{0}$  & 0.993&    0.956&   0.825&    0.569&    0.318&    0.125\\
$\widehat T_{LV1}$: Rejected $H^{FUR}_{0}$  &  1.000 & 0.999& 0.965  &  0.716 &    0.358&   0.106
\\
$\widehat T_{LV10}$: Rejected $H^{FUR}_{0}$  &  0.682 &   0.504 &   0.294&    0.173&    0.110&   0.057
\\
\hline
\hline
\end{tabular}\\
~\\
\vspace{3mm}
\begin{tabular}{|c|c|c|c|c|c|c|}
\hline\hline
$N=500$ &&&&&&\\
ARFIMA$(0,d,0)$ &$d=0.5$ &$d=0.6$ &$d=0.7$ &$d=0.8$ &$d=0.9$ &$d=1$ \\
\hline \hline
$\widetilde F_N$: Rejected $H^{FUR}_{0}$  & 0.998 &   0.991&    0.968  &  0.816   & 0.416 &   0.101  \\
$\widehat T_{DGM}$: Rejected $H^{FUR}_{0}$  & 0.998   & 0.998 &   0.999 &   0.984 &   0.607  &  0.049
\\
$\widehat T_{LV1}$: Rejected $H^{FUR}_{0}$  &  1.000 &   1.000 &   1.000&    0.960&    0.441&   0.052
\\
$\widehat T_{LV10}$: Rejected $H^{FUR}_{0}$  &  0.912 &   0.783&   0.527&    0.243&    0.095&   0.048
\\
\hline
ARFIMA$(1,d,0)$ &&&&&& \\
\hline \hline
$\widetilde F_N$: Rejected $H^{FUR}_{0}$  & 0.998&   0.994&    0.952&    0.814&  0.510&    0.145\\
$\widehat T_{LV1}$: Rejected $H^{FUR}_{0}$  &  1.000 &   1.000&  1.000&  1.000 &    0.669& 0.053
\\
$\widehat T_{LV10}$: Rejected $H^{FUR}_{0}$  &  0.900 &   0.787&   0.509&    0.270&    0.115&  0.048
\\
\hline
ARFIMA$(1,d,1)$ &&&&&& \\
\hline \hline
$\widetilde F_N$: Rejected $H^{FUR}_{0}$  & 0.999&   0.988&    0.904&    0.619&  0.241&    0.088\\
$\widehat T_{LV1}$: Rejected $H^{FUR}_{0}$  &  1.000 &   0.998&   0.927&    0.150&    0.135&  0.919
\\
$\widehat T_{LV10}$: Rejected $H^{FUR}_{0}$  &  0.891 &   0.699&   0.416&    0.220&    0.101& 0.040
\\
\hline $X^{(d,0,1)}$ &&&&&&\\
\hline \hline
$\widetilde F_N$: Rejected $H^{FUR}_{0}$  & 0.999&    0.990&   0.937&    0.800&    0.421&    0.109\\
$\widehat T_{LV1}$: Rejected $H^{FUR}_{0}$  &  1.000 &  1.000& 1.000  &  0.987 &    0.638&   0.132
\\
$\widehat T_{LV10}$: Rejected $H^{FUR}_{0}$  &  0.981 &   0.902 &   0.635&    0.358&    0.170&   0.072
\\
\hline
\hline
\end{tabular}\\
~\\
\vspace{3mm}
\begin{tabular}{|c|c|c|c|c|c|c|}
\hline\hline
$N=5000$ &&&&&&\\
ARFIMA$(0,d,0)$ &$d=0.5$ &$d=0.6$ &$d=0.7$ &$d=0.8$ &$d=0.9$ &$d=1$ \\
\hline \hline
$\widetilde F_N$: Rejected $H^{FUR}_{0}$  &  1.000  &  1.000  &  1.000  &  0.996   & 0.941  &  0.145 \\
$\widehat T_{DGM}$: Rejected $H^{FUR}_{0}$  &  1.000   & 1.000 &   1.000 &   1.000  &  1.000  &  0.059
\\
$\widehat T_{LV1}$: Rejected $H^{FUR}_{0}$  &  1.000   &   1.000  & 1.000  &   1.000  & 1.000  & 0.055
\\
$\widehat T_{LV10}$: Rejected $H^{FUR}_{0}$  &  1.000   & 1.000  & 1.000  &    0.997&    0.576& 0.049
\\
\hline
ARFIMA$(1,d,0)$ &&&&&& \\
\hline \hline
$\widetilde F_N$: Rejected $H^{FUR}_{0}$  &  1.000& 1.000&   0.998&  0.994&    0.904&   0.105\\
$\widehat T_{LV1}$: Rejected $H^{FUR}_{0}$  & 1.000 & 1.000&  1.000&   1.000&  1.000&  0.054
\\
$\widehat T_{LV10}$: Rejected $H^{FUR}_{0}$  &  1.000 &  1.000&  1.000&    0.999&    0.652&  0.056
\\
\hline
ARFIMA$(1,d,1)$ &&&&&& \\
\hline \hline
$\widetilde F_N$: Rejected $H^{FUR}_{0}$  &  1.000& 1.000&   0.998&  0.989&    0.808&   0.072\\
$\widehat T_{LV1}$: Rejected $H^{FUR}_{0}$  &  1.000 & 1.000&  1.000&  0.884&    0.965&   1.000
\\
$\widehat T_{LV10}$: Rejected $H^{FUR}_{0}$  &  1.000 & 1.000& 1.000&    0.988&    0.488&  0.051
\\
\hline $X^{(d,0,1)}$ &&&&&&\\
\hline \hline
$\widetilde F_N$: Rejected $H^{FUR}_{0}$  &  1.000&    1.000&    1.000&    0.998&    0.941&    0.122\\
$\widehat T_{LV1}$: Rejected $H^{FUR}_{0}$  &  1.000 &   1.000&   1.000&   1.000& 0.999 &   0.269
\\
$\widehat T_{LV10}$: Rejected $H^{FUR}_{0}$  &  1.000 &   1.000&   1.000&    0.983&    0.493&  0.058
\\
\hline
\hline
\end{tabular}
\end{center}
}
\caption{{ \small \label{Table9}Comparisons of Fractional Unit Roots  tests, as $1000$ independent Monte Carlo experiment replications, of processes for several values of $d$ and $N$. Note that the AR parameter of the ARFIMA$(1,d,0)$ is $0.5$ and the AR and MA parameters of ARFIMA$(1,d,1)$ are respectively $-0.3$ and $0.7$. The accuracy of tests is measured by the rejection probabilities.}
}
\end{table}
~\\
\underline{{\bf Conclusions of simulations:}} If $\widehat T_{DGM}$ and $\widehat T_{LV1}$ provide extremely convincing results for ARFIMA$(0,d,0)$ processes, $\widehat T_{LV1}$ is still very accurate for ARFIMA$(1,d,0)$ processes. From its definition, the fractional unit roots $\widehat T_{LV1}$ can not be used fruitfully for ARFIMA$(1,d,1)$ or $X^{(d,0,1)}$ processes but it can clearly be applied to a more general class of processes than $\widehat T_{DGM}$.\\
The same for $\widehat T_{LV10}$ which can be applied likely to a more general class of processes than $\widehat T_{LV1}$. However, if the results obtained here for ARFIMA$(1,d,1)$ or $X^{(d,0,1)}$ are satisfying and indicate that this test could be applied when $N=500$ and $N=5000$, this is not the case for $N=200$ for all the considered processes because this test requires the estimation of too many parameters. Moreover, $\widehat T_{LV10}$ can not theoretically be applied to ARFIMA$(1,d,1)$ or $X^{(d,0,1)}$ processes and additional simulations realized with $N=500000$ indicate a rejection probability $\simeq 0.16$ for ARFIMA$(1,d,1)$ processes when $d=1$ (instead of $0.05$). However, when $N=500000$, a user could probably chose $\widehat T_{LV20}$ or $\widehat T_{LV50}$ which would provide satisfying results. \\
The fractional unit roots test $\widetilde F_N$ constructed from $\widetilde {d}_N^{(MIR)}$ does not have these drawbacks: this is a data-driven test and it can be applied to a large family of fractional processes. Hence, the results of simulations obtained with $\widetilde F_N$ are satisfying (even if they are less efficient for specific processes than those obtained with $\widehat T_{LV10}$ which requires the knowledge of the $AR$ component). Even if this is a semiparametric test, the results obtained for $N=200$ are reasonable. However, note that the rejection probability of $\widetilde F_N$ for $d=0.9$ is much bigger than for $d=1$ when  $N=200$ and $N=500$. Hence one could size adjust this test to get a better performance.
		
\section{Conclusion}\label{conclu}
The adaptive data-driven memory parameter estimator $\widetilde {d}_N^{(MIR)}$ proposed in this paper has a lot of advantages. Firstly, for any process belonging to the set $\big (\mbox{IG}(d,\beta)\big ) _{-0.5 < d <1.25, \, 0<\beta\leq 2}$, it follows a CLT with a convergence rate reaching the minimax convergence rate (up to a multiplicative logarithm term) and this CLT is obtained without any choice of bandwidth or trimming parameter. Secondly, the numerical performances of this estimator are often better than those of the most accurate semiparametric memory parameter estimators, especially in case of trended processes (the robustness of IR estimator was already established in Surgailis {\it et al.}, 2008). Finally, data-driven stationarity and fractional unit roots tests are constructed from this estimator and they provide accurate competitive results with respect to classical unit roots or fractional unit roots tests. Improving the performance of those tests could be an interesting task.  

An asymptotic study of these new estimator and tests for linear processes could be an interesting extension of this paper. However, this requires to proof a multidimensional CLT theorem for a non-polynomial function of a multidimensional linear process which is a difficult result to be established.

\section{Proofs}\label{proofs}
Two technical lemmas are first established:
\begin{lemma}\label{Taqqup31}
For all $\lambda >0$
\begin{enumerate}
\item For $a\in(0,2 )$, $\displaystyle\frac{2}{|\lambda|^{a-1}}\int_{0}^{\infty}\frac{\sin(\lambda x)}{x^{a}}dx=\frac{4a}{2^{a}|\lambda|^{a}}\int_{0}^{\infty}\frac{\sin^{2}(\lambda x)}{x^{a+1}}dx=\frac{~\pi~}{\Gamma(a)\sin(\frac{a \pi}{2})}$;
\item For $b\in(-1,1 )$, $\displaystyle \frac{1}{2^{1-b}-1}\int_{0}^{\infty}\frac{\sin^{4}(\lambda x)}{x^{4-b}}dx=\frac{16}{-15+6\cdot2^{3-b}-3^{3-b}}\times \int_{0}^{\infty}\frac{\sin^{6}(\lambda x)}{x^{4-b}}dx=\frac{~2^{3-b}|\lambda|^{3-b}~\pi~}{4~\Gamma(4-b)\sin(\frac{(1-b) \pi}{2})}$;
\item For $b\in(1,3 )$, $\displaystyle \frac{1}{1-2^{1-b}}\int_{0}^{\infty}\frac{\sin^{4}(\lambda x)}{x^{4-b}}dx=\frac{16}{15-6\cdot2^{3-b}+3^{3-b}}\times \int_{0}^{\infty}\frac{\sin^{6}(\lambda x)}{x^{4-b}}dx=\frac{~2^{3-b}|\lambda|^{3-b}~\pi~}{4~\Gamma(4-b)\sin(\frac{(3-b) \pi}{2})}$.
    \end{enumerate}
\end{lemma}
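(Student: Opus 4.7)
The plan is to reduce each identity to the case $\lambda=1$ by the substitution $u=\lambda x$, which merely extracts the stated powers of $|\lambda|$. The engine for the entire lemma is the classical pair of Mellin transforms $\int_0^\infty u^{s-1}\sin u\,du=\Gamma(s)\sin(s\pi/2)$ for $s\in(-1,1)$ and $\int_0^\infty u^{s-1}\cos u\,du=\Gamma(s)\cos(s\pi/2)$ for $s\in(0,1)$, the latter extended by analytic continuation (equivalently, by subtracting the appropriate Taylor polynomial of the cosine at $u=0$) to any strip where the regularized integrand is integrable.

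For part~1, the left-hand equality follows directly from the sine formula with $s=1-a\in(-1,1)$. For the $\sin^2$-piece, I would rewrite $\sin^2(\lambda x)=\tfrac12(1-\cos(2\lambda x))$ and apply the regularized cosine formula on the strip $s=-a\in(-2,0)$. The common target $\pi/[\Gamma(a)\sin(a\pi/2)]$ is then obtained using the reflection formula $\Gamma(s)\Gamma(1-s)=\pi/\sin(\pi s)$ together with $\sin((1-a)\pi/2)=\cos(a\pi/2)$ and $\sin(a\pi)=2\sin(a\pi/2)\cos(a\pi/2)$.

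For parts~2 and~3 the key algebraic step is to expand
\[
\sin^4 u=\tfrac18\bigl(3-4\cos(2u)+\cos(4u)\bigr),\qquad \sin^6 u=\tfrac1{32}\bigl(10-15\cos(2u)+6\cos(4u)-\cos(6u)\bigr),
\]
and to observe that the brackets vanish at $u=0$ to orders $u^4$ and $u^6$ respectively. Writing each bracket as $\sum_j a_j(\cos(\alpha_j u)-1)$ and checking both $\sum a_j=0$ and $\sum a_j\alpha_j^2=0$ ensures that, on the strip $s=b-3\in(-4,-2)$ relevant to $b\in(-1,1)$, all divergent pieces $\int u^{s-1}$ and $\int u^{s+1}$ arising from the regularization cancel out. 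After substituting $v=2\lambda x$ and applying the reflection identity $\Gamma(b-3)\cos((b-3)\pi/2)=\pi/[2\Gamma(4-b)\sin((1-b)\pi/2)]$, I obtain
\[
\int_0^\infty\frac{\sin^4(\lambda x)}{x^{4-b}}dx=\frac{\pi(2^{3-b}-4)(2\lambda)^{3-b}}{16\,\Gamma(4-b)\sin((1-b)\pi/2)},
\]
and the analogous identity with $-15+6\cdot2^{3-b}-3^{3-b}$ replacing $2^{3-b}-4$ in the $\sin^6$-case. The normalizations $1/(2^{1-b}-1)$ and $16/(-15+6\cdot 2^{3-b}-3^{3-b})$ then both match the stated right-hand side thanks to the arithmetic identity $2^{3-b}-4=4(2^{1-b}-1)$. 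Part~3 follows verbatim once one notes $\sin((3-b)\pi/2)=-\sin((1-b)\pi/2)$, which absorbs the sign change of the normalizing factors as $b$ crosses~$1$.

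The main obstacle is the bookkeeping of the regularization: one must verify simultaneously that the zeroth- and second-order Taylor coefficients of the cosine-polynomial bracket vanish, so that the only surviving contributions come from the analytically continued cosine transforms. Once these two algebraic cancellations are confirmed for both brackets, the remainder of the argument is a routine manipulation of the reflection and duplication identities for $\Gamma$ and of $\sin/\cos$ of half-angle arguments.
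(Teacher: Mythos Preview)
Your approach is correct and coincides with the paper's, which merely records that the first identity is taken from Doukhan \emph{et al.}, p.~31, and that the others are ``deduced (using decompositions of $\sin^j(\cdot)$ and integration by parts)'' from the formulas there. You have supplied exactly those details: the cosine expansions of $\sin^4$ and $\sin^6$, the cancellation of the low-order Taylor terms, and the reduction to the basic sine/cosine Mellin integrals (integration by parts being the standard way to establish the analytic continuation you invoke). One small wording point: when you write the bracket as $\sum_j a_j(\cos(\alpha_j u)-1)$, the condition ``$\sum a_j=0$'' is already automatic; the two genuine checks are the vanishing of the $u^0$- and $u^2$-coefficients of $\sum_j a_j\cos(\alpha_j u)$, which you do verify.
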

\begin{proof}
These equations are given or deduced (using decompositions of $\sin^j(\cdot)$ and integration by parts) from (see Doukhan {\it et al.},~p. 31).\\
\end{proof}
\begin{lemma}\label{lemma46}
For $j=4,6$, denote
\begin{equation}\label{Jjdef}
J_j(a,m):=\int_{0}^{\pi}x^{a}\frac{\sin^{j}(\frac{mx}{2})}{\sin^{4}(\frac{x}{2})}dx.
\end{equation}
Then, we have the following expansions when $m \to \infty$:
\begin{eqnarray}\label{Jj}
 J_{j}(a,m)=\left \{ \begin{array}{ll}
                      {C}_{j1}(a)\,m^{3-a}+O\big(m^{1-a}\big) & \mbox{if $-1<a<1$}\\
{C}'_{j1}(1)\,m^{3-a}+O\big (\log(m)\big) & \mbox{if $a=1$} \\
{C}'_{j1}(a)\,m^{3-a}+O\big (1\big)& \mbox{if $1<a<3$} \\
{C}'_{j2}(3)\,\log(m)+O\big (1\big)& \mbox{if $a=3$} \\
{C''}_{j1}(a)+O\big(m^{-((a-3)\wedge2)})& \mbox{if $a>3$}
                     \end{array}
\right .
\end{eqnarray}
with the following real constants (which do not vanish for any $a$ on the corresponding set):
\begin{eqnarray*}\label{AllConstantTild}
&\bullet & {C}_{41}(a):=\frac{4~~\pi(1-\frac{2^{3-a}}{4})}{(3-a)\Gamma(3-a)\sin(\frac{(3-a)\pi}{2})}\quad  \mbox{and}\quad {C}_{61}(a):=\frac{\pi(15-6\cdot2^{3-a}~+3^{3-a})}{4(3-a)\Gamma(3-a)\sin(\frac{(3-a)\pi}{2})}\\
&\bullet & {C}'_{41}(a):=\Big(\frac{6}{3-a}\textbf{1}_{\{1\leq a<3\}}+16\int_{0}^{1}\frac{\sin^{4}(\frac{y}{2})}{y^{4-a}}dy+2\int_{1}^{\infty}\frac{1}{y^{4-a}}\Big(-4\cos(y)+\cos(2y)\Big)dy\Big)\\
&& \mbox{and} \quad {C}'_{61}(a):=\Big[16\int_{0}^{1}\frac{\sin^{6}(\frac{y}{2})}{y^{4-a}}dy+\frac{5}{3-a}\textbf{1}_{\{1\leq a<3\}}+\frac{1}{2}\int_{1}^{\infty}\frac{1}{y^{4-a}}\Big(-15\cos(y)+6\cos(2y)-\cos(3y)\Big)dy\Big]\\
&\bullet& {C}'_{42}(a):=\Big(6\cdot\textbf{1}_{\{a=3\}}+\textbf{1}_{\{a=1\}}\Big)\quad  \mbox{and}\quad
{C}'_{62}(a):=\Big(5\cdot\textbf{1}_{\{a=3\}}+\frac{5}{6}\cdot\textbf{1}_{\{a=1\}}\Big)\\
&\bullet& {C}''_{41}(a):=\frac{3}{8}\int_{0}^{\pi}\frac{x^{a}}{\sin^{4}(\frac{x}{2})}dx\quad  \mbox{and}\quad
{C}''_{61}(a):=\frac{5}{16}\int_{0}^{\pi}\frac{x^{a}}{\sin^{4}(\frac{x}{2})}dx.
\end{eqnarray*}
\end{lemma}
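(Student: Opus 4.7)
The plan is to reduce the asymptotic evaluation of $J_j(a,m)$ to classical oscillatory-integral estimates, with Lemma~\ref{Taqqup31} providing the explicit constants, via three main reductions.

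First, the change of variable $y = mx$ together with the Taylor expansion $1/\sin^4(u) = u^{-4} + \tfrac{2}{3} u^{-2} + O(1)$ at $u=0$, applied to $u = y/(2m)$, yields the decomposition
\[
J_j(a,m) = 16\, m^{3-a}\, L_j(a,m) + \tfrac{8}{3}\, m^{1-a}\, \widetilde L_j(a,m) + R_j(a,m),
\]
where $L_j(a,m) := \int_0^{m\pi} y^{a-4}\sin^j(y/2)\,dy$, $\widetilde L_j(a,m) := \int_0^{m\pi} y^{a-2}\sin^j(y/2)\,dy$, and $R_j(a,m)$ is a remainder with bounded kernel, controlled by Riemann--Lebesgue-type arguments. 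For $a<3$ the first term is the leading one.

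Second, I would analyse $L_j$ using the Fourier-type identities $\sin^4 u = \tfrac{1}{8}(3 - 4\cos 2u + \cos 4u)$ and $\sin^6 u = \tfrac{1}{32}(10 - 15\cos 2u + 6\cos 4u - \cos 6u)$, which write $\sin^j(y/2)$ as the mean value $\bar s_j$ (equal to $3/8$ for $j=4$, and $5/16$ for $j=6$) plus oscillating cosines. Splitting at $y=1$: the piece on $[0,1]$ gives the explicit $16\int_0^1 \sin^j(y/2)/y^{4-a}\,dy$ contribution; on $[1,m\pi]$, the constant part $\bar s_j \int_1^{m\pi} y^{a-4}\,dy$ produces the $\tfrac{6}{3-a}$ or $\tfrac{5}{3-a}$ term when $a<3$ and the divergence $\bar s_j \log m$ when $a=3$ (giving $C'_{j2}(3) = 16\bar s_j$), while the oscillatory residual $\int_1^{m\pi} y^{a-4}\cos(ky)\,dy$ converges as $m\to\infty$ to the explicit tail integrals defining $C'_{j1}(a)$. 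On the smaller range $-1 < a < 1$ the integral extends absolutely to $[0,\infty)$, and Lemma~\ref{Taqqup31} supplies the closed form $C_{j1}(a)$; the two forms agree on the overlap via $\sin((3-a)\pi/2) = -\sin((1-a)\pi/2)$. The logarithmic correction at $a=1$ comes from the $\widetilde L_j$ term, where $\int_0^{m\pi} y^{-1}\sin^j(y/2)\,dy \sim \bar s_j \log m$.

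Third, when $a>3$ the original integrand $x^a/\sin^4(x/2)$ is itself integrable on $[0,\pi]$, so it is more natural to expand $\sin^j(mx/2)$ by the same Fourier identities:
\[
J_j(a,m) = \bar s_j \int_0^\pi \frac{x^a}{\sin^4(x/2)}\,dx + \sum_k c_k \int_0^\pi \frac{x^a \cos(kmx)}{\sin^4(x/2)}\,dx,
\]
with the first summand equal to $C''_{j1}(a)$. For each oscillatory integral, the near-$0$ expansion $x^a/\sin^4(x/2) = 16 x^{a-4} + \tfrac{8}{3}x^{a-2} + O(x^a)$ combined with the standard asymptotic $\int_0^\pi x^\beta \cos(\lambda x)\,dx = O(\lambda^{-\beta-1})$ for $-1<\beta<0$, together with up to two integrations by parts on the regular pieces, produces the rate $O(m^{-(a-3)\wedge 2})$; the saturation at $m^{-2}$ reflects the fact that further integration by parts is obstructed by the $x^{-4}$ singularity of $\sin^{-4}(x/2)$ at the origin. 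The main technical difficulties I anticipate are the precise tracking of the subleading $m^{1-a}$ term and the remainder $R_j$ to obtain the stated error terms at the critical values $a=1$ and $a=3$, the verification that the Lemma~\ref{Taqqup31} closed forms reduce to the constants $C_{j1}$, $C'_{j1}$, $C'_{j2}$, $C''_{j1}$ after simplification, and the sharp integration-by-parts analysis in the $a>3$ range, where the interplay between the singularity at $x=0$ and the boundary behaviour at $x=\pi$ pins down the exponent $(a-3)\wedge 2$.
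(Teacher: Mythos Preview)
Your proposal is correct and follows essentially the same route as the paper: the decomposition $1/\sin^4(x/2)=(x/2)^{-4}+\tfrac{2}{3}(x/2)^{-2}+O(1)$ isolating a bounded remainder $\widetilde J_j(a,m)$ (your $R_j$), the evaluation of the two leading integrals via the change of variable $y=mx$ and Lemma~\ref{Taqqup31}, and the treatment of the remainder and of the $a>3$ regime through the Fourier identities $\sin^4(y/2)=\tfrac18(3-4\cos y+\cos 2y)$, $\sin^6(y/2)=\tfrac1{32}(10-15\cos y+6\cos 2y-\cos 3y)$ combined with integration by parts. Your bookkeeping of the constants (in particular $16\bar s_j$ yielding $C'_{j2}(3)$ and $16\bar s_j/(3-a)$ yielding the $6/(3-a)$, $5/(3-a)$ terms) matches the paper's, which in fact gives fewer details and largely refers back to Lemma~5.1 of Bardet and Dola (2012).
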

\begin{proof} The proof of these expansions follows the steps than those of Lemma 5.1 in Bardet and Dola (2012). Hence we write for $j=4,6$,
\begin{eqnarray}\label{Jjam}
J_j(a,m)&=&\widetilde{J}_{j}(a,m)+ \int_{0}^{\pi}x^{a}\sin^{j}(\frac{mx}{2})\frac{1}{(\frac{x}{2})^{4}}dx+ \int_{0}^{\pi}x^{a}\sin^{j}(\frac{mx}{2})\frac{2}{3}\frac{1}{(\frac{x}{2})^{2}}dx
\end{eqnarray}
with
\begin{eqnarray*}
\widetilde{J}_{j}(a,m)&:=&\int_{0}^{\pi}x^{a}\sin^{j}(\frac{mx}{2})\big(\frac{1}{\sin^{4}(\frac{x}{2})}-\frac{1}{(\frac{x}{2})^{4}}-\frac{2}{3}\frac{1}{(\frac{x}{2})^{2}}\big)dx.
\end{eqnarray*}
The expansions when $m\to \infty$ of both the right hand sided integrals in (\ref{Jjam}) are obtained from Lemma \ref{Taqqup31}. It remains to obtain the expansion of $\widetilde{J}_{j}(a,m)$. Then, using classical trigonometric and Taylor expansions:
\begin{eqnarray*}
\sin^{4}(\frac{y}{2})&=& \frac 1 8 \big ( 3-4\cos(y)+\cos(2y) \big )\quad \mbox{and}\quad \frac{1}{\sin^{4}(y)}-\frac{1}{y^{4}}-\frac{2}{3}\frac{1}{y^{2}}\sim \frac{11}{45}\quad (y \to 0)\\
\sin^{6}(\frac{y}{2})&=&\frac 1 {32} \big (10-15\cos(y)+6\cos(2y)-\cos(3y) \big )\quad \mbox{and}\quad \frac{1}{y^{5}}+\frac{1}{3}\frac{1}{y^{3}}-\frac{\cos(y)}{\sin^{5}(y)}\sim \frac{31}{945}\, y\quad (y \to 0),
\end{eqnarray*}
the expansions of $\widetilde{J}_{j}(a,m)$ can be obtained.

Numerical experiments show that ${C}''_{41}(a)\neq0$, ${C}''_{61}(a)\neq0$, ${C}''_{42}(a)\neq0$ and ${C}''_{62}(a)\neq0$.
\end{proof}

\begin{proof}[Proof of Proposition \ref{MCLT}]
This proposition is based on results of Surgailis {\it et al.} (2008) and was already proved in Bardet et Dola (2012) in the case $-0.5<d<0.5$. \\
{\it Mutatis mutandis}, the case $0.5<d<1.25$ can be treated exactly following the same steps. \\
The only new proof which has to be established concerns the case $d=0.5$ since Surgailis {\it et al.} (2008) do not provide a CLT satisfied by the (unidimensional) statistic $IR_N(m)$ in this case. Let $Y_{m}(j)$ the standardized process defined Surgailis {\it et al.} (2008). Then, for $d=0.5$,  \begin{eqnarray*}\label{Rmj}
\forall j \geq 1,\quad |\gamma_{m}(j)|=\big |\E \big ( Y_{m}(j) Y_{m}(0) \big ) \big | = \frac{2}{V^{2}_{m}} \Big |\int_{0}^{\pi}~\cos(jx)~x\big(c_{0}+O(x^{\beta})\big)\frac{\sin^{4}(\frac{mx}{2})}{\sin^{4}(\frac{x}{2})}dx \Big |.
\end{eqnarray*}
Denote $\gamma_{m}(j)=\rho_{m}(j)=\frac{2}{V^{2}_{m}}\big(I_{1}+I_{2}\big)$ as in (5.39) of Surgailis {\it et al.} (2008). The expansion (2.20) of  Surgailis {\it et al.} (2008)  remains true for $d=0.5$ and therefore $V^2_m\sim c_0V(0.5)m^2$ when $m\to \infty$. The same for the  inequality (5.42) when $d=0.5$ and thus $|I_{1}|\leq C\, m^{4}j^{-2}$. Finally, when $d=0.5$, we still have $I_2 = j^{-1} \sum_{q=1} ^{j/2} I_2(q)$ with $|I_2(q)|\leq C m^4 j^{-1}$ when $1 \leq q \leq j/m$ and $|I_2(q)|\leq C q^{-4}j^3$ when $j/m \leq q \leq j$ (see  details p. 536-537 of Surgailis {\it et al.}, 2008). Then, the inequality (5.41) remains true for $d=0.5$ and since we consider here $j \geq m$,
\begin{multline*}
 |I_{2}|\leq C\, m^{3}j^{-1}\quad  \Longrightarrow \quad |I_{1}+I_{2}|\leq C\, m^{3}j^{-1} \quad
\Longrightarrow \quad  |\gamma_{m}(j)|=|\rho_{m}(j)|\leq\frac{2}{V^{2}_{m}}\big(|I_{1}+I_{2}|\big)\leq C\, \frac{m}{j}.
\end{multline*}
Now let $\eta_{m}(j):=\frac{|Y_{m}(j)+Y_{m}(j+m)|}{|Y_{m}(j)|+|Y_{m}(j+m)|}:=\psi\Big(Y_{m}(j),Y_{m}(j+m)\Big)$. The Hermite rank of the function $\psi$ is $2$ and therefore the equation (5.23) of Surgailis {\it et al.} (2008) obtained from Lemma 1 of Arcones (1994) remains valid. Hence:
$$
\big |\Cov(\eta_{m}(0),\eta_{m}(j))\big |\leq C\frac{m^{2}}{j^{2}},
$$
 and then the equations (5.28-5.31) of Surgailis {\it et al.} (2008) remain valid for all $d\in[0.5,1.25)$. Then for $d=0.5$,
\begin{eqnarray} \label{CLTuni}
\sqrt{\frac{N}{m}}\, \Big(IR_N(m)-\E \big [ IR_N(m)\big ]\Big)\limiteloiNm \mathcal{N}\big(0,\sigma^{2}(0.5)\big),
\end{eqnarray}
with $\sigma^{2}(0.5)\simeq (0.2524)^{2}$.\\
~\\
To establish the multidimensional CLT for $0.5\leq d \leq 1.25$ from \eqref{CLTuni} and unidimensional CLT of Surgailis {\it et al} (2008) for $0.5 < d \leq 1.25$,  we can reproduce exactly  the steps 1 and 2 of the proof in Proposition 2.1 of Bardet and Dola (2012).
\end{proof}

\begin{proof}[Proof of Property \ref{devEIR}]
As in Surgailis {\it et al} (2008), we can write:
\begin{eqnarray} \label{RmVm}
\nonumber \E \big [ IR_N(m) \big ]=\E\big(\frac{|Y^{0}+Y^{1}|}{|Y^{0}|+|Y^{1}|}\big)=\Lambda(\frac{R_{m}}{V_{m}^{2}}) \quad
\mbox{with}\quad
\frac{R_{m}}{V_{m}^{2}}:=1-2 \, \frac{\int_{0}^{\pi}f(x)\frac{\sin^{6}(\frac{mx}{2})}{\sin^{2}(\frac{x}{2})}dx~}{\int_{0}^{\pi}f(x)
\frac{\sin^{4}(\frac{mx}{2})}{\sin^{2}(\frac{x}{2})}dx}.
\end{eqnarray}
Therefore an expansion of $R_{m}/V_{m}^{2}$ provides an expansion of $\E \big [ IR_N(m) \big ]$ when $m\to \infty$.\\
~\\
{\bf Step 1} Let $f$ satisfy Assumption $IG(d,\beta)$. Then we are going to establish that there exist positive real numbers ${C}_{1}$, ${C}_{2}$ and ${C}_{3}$ specified in (\ref{Ctild1}), (\ref{Ctild2}) and (\ref{Ctild3}) such that for $0.5\leq d<1.5$ and with $\rho(d)$ defined in (\ref{DefinitionRhod}),
\begin{eqnarray*}
&1.& \quad \mbox{if $ \beta < 2d-1$,} \quad  \frac{R_{m}}{V_{m}^{2}}=\rho(d)+{C}_{1}(2-2d,\beta)m^{-\beta}+O\Big(m^{-2} +m^{-2\beta}\Big); \\
&2.&\quad \mbox{if $ \beta = 2d-1$,} \quad  \frac{R_{m}}{V_{m}^{2}}=\rho(d)+{C}_{2}(2-2d,\beta)m^{-\beta}+O\Big(m^{-2}+m^{-2-\beta}\log(m)+m^{-2\beta} \Big); \\
&3.&\quad \mbox{if $ 2d-1<\beta <2d+1$,} \quad  \frac{R_{m}}{V_{m}^{2}}=\rho(d)+{C}_{3}(2-2d,\beta)m^{-\beta}+O\Big(m^{-\beta-\epsilon}+m^{-2d-1}\log(m)+m^{-2\beta}\Big);\\
&4.&\quad\mbox{if $\beta =2d+1$,} \quad  \frac{R_{m}}{V_{m}^{2}}=\rho(d)+O\Big(m^{-2d-1}~\log(m)+m^{-2} \Big).
\end{eqnarray*}
Under Assumption $IG(d,\beta)$ and with $J_j(a,m)$ defined in (\ref{Jj}) in Lemma \ref{lemma46}, it is clear that,
$$
\frac{R_{m}}{V_{m}^{2}}=1-2\, \frac{J_6(2-2d,m)+ \frac{c_{1}}{c_{0}}J_6(2-2d+\beta,m) +O(J_6(2-2d+\beta+\varepsilon))}
{J_4(2-2d,m)+ \frac{c_{1}}{c_{0}}J_4(2-2d+\beta,m) +O(J_4(2-2d+\beta+\varepsilon))},
$$
since $\displaystyle \int_{0}^{\pi}O(x^{2-2d+\beta+\varepsilon})\frac{\sin^{j}(\frac{mx}{2})}{\sin^{2}(\frac{x}{2})}dx=O(J_j(2-2d+\beta+\varepsilon))$. Now using the results of Lemma \ref{lemma46} and constants ${C}_{j\ell}$, ${C}'_{j\ell}$ and ${C}''_{j\ell}$, $j=4,\, 6$, $\ell=1,2$ defined in Lemma \ref{lemma46},\\
~\\
1. Let $0<\beta<2d-1<2$, {\it i.e.} $-1<2-2d+\beta<1$. Then
\begin{eqnarray*}
\frac{R_{m}}{V_{m}^{2}}& \hspace{-3mm}=&\hspace{-3mm} 1\hspace{-1mm} -\hspace{-1mm}2\, \frac{
{C}_{61}(2-2d)~m^{1+2d}\hspace{-1mm}+\hspace{-1mm}O\big(m^{2d-1}\big)+\hspace{-1mm}\frac{c_{1}}{c_{0}}{C}_{61}(2-2d+\beta)m^{1+2d-\beta}\hspace{-1mm}+\hspace{-1mm}O\big(m^{2d-1-\beta}\big)}
{{C}_{41}(2-2d)m^{1+2d}\hspace{-1mm}+\hspace{-1mm}O\big(m^{2d-1}\big)+\hspace{-1mm}\frac{c_{1}}{c_{0}} {C}_{41}(2-2d+\beta)m^{1+2d-\beta}\hspace{-1mm}+\hspace{-1mm}O\big(m^{2d-1-\beta}\big)}\\
&\hspace{-3mm}=&\hspace{-3mm}1\hspace{-1mm}-\hspace{-1mm}\frac{2}{{C}_{41}(2-2d)}\Big[{C}_{61}(2-2d)\hspace{-1mm}+\hspace{-1mm}\frac{c_{1}}{c_{0}}{C}_{61}(2-2d+\beta)m^{-\beta}\Big]\Big[1\hspace{-1mm}-\hspace{-1mm}\frac{c_{1}}{c_{0}}\frac{{C}_{41}(2-2d+\beta)}{{C}_{41}(2-2d)}m^{-\beta}\Big]\hspace{-1mm}+\hspace{-1mm}O\big(m^{-2}\big)\\
&\hspace{-3mm}=&\hspace{-3mm}1\hspace{-1mm}-\hspace{-1mm}\frac{2{C}_{61}(2-2d)}{{C}_{41}(2-2d)}\hspace{-1mm}+\hspace{-1mm}2\frac{c_{1}}{c_{0}}\Big[\frac{{C}_{61}(2-2d){C}_{41}(2-2d+\beta)}{{C}_{41}(2-2d){C}_{41}(2-2d)}\hspace{-1mm}-\hspace{-1mm}\frac{{C}_{61}(2-2d+\beta)}{{C}_{41}(2-2d)}\Big]m^{-\beta}\hspace{-1mm}+\hspace{-1mm}O\big(m^{-2}+m^{-2\beta}\big).
\end{eqnarray*}
As a consequence,,
\begin{multline}\label{Ctild1}
\frac{R_{m}}{V_{m}^{2}}=\rho(d)~+~{C}_{1}(2-2d,\beta)~~m^{-\beta}+~O\Big(m^{-2}+m^{-2\beta} \Big)\quad (m\to \infty),\quad \mbox{with $0<\beta<2d-1<2$ and} \\
{C}_{1}(2-2d,\beta):=2 \, \frac{c_{1}}{c_{0}}\frac 1 {{C}^2_{41}(2-2d)}\big[{C}_{61}(2-2d){C}_{41}(2-2d+\beta)-{C}_{61}(2-2d+\beta)
{C}_{41}(2-2d)\big],
\end{multline}
and numerical experiments proves that $ {C}_{1}(2-2d,\beta)/c_1$ is negative for any $d \in (0.5,1.5)$ and $\beta>0$. \\
~\\
2. Let $\beta=2d-1$, {\it i.e.} $2-2d+\beta=1$. Then,
\begin{eqnarray*}
\frac{R_{m}}{V_{m}^{2}}& \hspace{-3mm}=&\hspace{-3mm} 1\hspace{-1mm} -\hspace{-1mm}2\, \frac{
{C}_{61}(2-2d)~m^{1+2d}\hspace{-1mm}+\hspace{-1mm}O\big(m^{2d-1}\big)+\hspace{-1mm}\frac{c_{1}}{c_{0}}{C'}_{61}(1)m^{1-2d}\hspace{-1mm}+\hspace{-1mm}O\big(\log(m)\big)}
{{C}_{41}(2-2d)m^{1+2d}\hspace{-1mm}+\hspace{-1mm}O\big(m^{2d-1}\big)+\hspace{-1mm}\frac{c_{1}}{c_{0}} {C'}_{41}(1)m^{1-2d}\hspace{-1mm}+\hspace{-1mm}O\big(\log(m)\big)}\\
&\hspace{-3mm}=&\hspace{-3mm}1\hspace{-1mm}-\hspace{-1mm}\frac{2}{{C}_{41}(2-2d)}\Big[{C}_{61}(2-2d)\hspace{-1mm}+\hspace{-1mm}\frac{c_{1}}{c_{0}}{C'}_{61}(1)m^{1-2d}\Big]\Big[1\hspace{-1mm}-\hspace{-1mm}\frac{c_{1}}{c_{0}}\frac{{C'}_{41}(1)}{{C}_{41}(2-2d)}m^{1-2d}\Big]\hspace{-1mm}+\hspace{-1mm}O\big(m^{-2}+m^{-2d-1}\log(m)\big)\\
&\hspace{-3mm}=&\hspace{-3mm}1\hspace{-1mm}-\hspace{-1mm}\frac{2{C}_{61}(2-2d)}{{C}_{41}(2-2d)}\hspace{-1mm}+\hspace{-1mm}2\frac{c_{1}}{c_{0}}\Big[\frac{{C}_{61}(2-2d){C'}_{41}(1)}{{C}_{41}(2-2d){C}_{41}(2-2d)}\hspace{-1mm}-\hspace{-1mm}\frac{{C'}_{61}(1)}{{C}_{41}(2-2d)}\Big]m^{1-2d}\hspace{-1mm}+\hspace{-1mm}O\big(m^{-2}+m^{-2d-1}\log(m)+m^{2-4d}\big).
\end{eqnarray*}
As a consequence,
\begin{multline}\label{Ctild2}
\frac{R_{m}}{V_{m}^{2}}=\rho(d)~+~{C}_{2}(2-2d,\beta)~~m^{-\beta}+~O\Big(m^{-2}+m^{-2-\beta}\log(m)+m^{-2\beta} \Big)\quad (m\to \infty),\quad \mbox{with $0<\beta=2d-1<2$ and } \\
{C}_{2}(2-2d,\beta):=2 \, \frac{c_{1}}{c_{0}}\frac 1 {{C}^2_{41}(2-2d)}\big[{C}_{61}(2-2d){C'}_{41}(1)-{C'}_{61}(1)
{C}_{41}(2-2d)\big],
\end{multline}
and numerical experiments proves that $ {C}_{2}(2-2d,\beta)/c_1$ is negative for any $d \in [0.5,1.5)$ and $\beta>0$. \\\\
3.  Let $2d-1<\beta<2d+1$,  {\it i.e.} $1<2-2d+\beta<3$. Then,
\begin{eqnarray*}
\frac{R_{m}}{V_{m}^{2}}& \hspace{-3mm}=&\hspace{-3mm} 1\hspace{-1mm} -\hspace{-1mm}2\, \frac{
{C}_{61}(2-2d)m^{1+2d}\hspace{-1mm}+\hspace{-1mm}\frac{c_{1}}{c_{0}}{C'}_{61}(2-2d+\beta)m^{1+2d-\beta}\hspace{-1mm}+\hspace{-1mm}O\big(m^{1+2d-\beta-\epsilon}+\log(m)\big)}
{{C}_{41}(2-2d)m^{1+2d}\hspace{-1mm}+\hspace{-1mm}\frac{c_{1}}{c_{0}} {C'}_{41}(2-2d+\beta)m^{1+2d-\beta}\hspace{-1mm}+\hspace{-1mm}O\big(m^{1+2d-\beta-\epsilon}+m^{-2d-1}\log(m)\big)}\\
&\hspace{-3mm}=&\hspace{-3mm}1\hspace{-1mm}-\hspace{-1mm}\frac{2}{{C}_{41}(2-2d)}\Big[{C}_{61}(2-2d)\hspace{-1mm}+\hspace{-1mm}\frac{c_{1}}{c_{0}}{C'}_{61}(2-2d+\beta)m^{-\beta}\Big]\Big[1\hspace{-1mm}-\hspace{-1mm}\frac{c_{1}}{c_{0}}\frac{{C'}_{41}(2-2d+\beta)}{{C}_{41}(2-2d)}m^{-\beta}\Big]\hspace{-1mm}+\hspace{-1mm}O\big(m^{-\beta-\epsilon}+m^{-2d-1}\log(m)\big)\\
&\hspace{-3mm}=&\hspace{-3mm}1\hspace{-1mm}-\hspace{-1mm}\frac{2{C}_{61}(2-2d)}{{C}_{41}(2-2d)}\hspace{-1mm}+\hspace{-1mm}2\frac{c_{1}}{c_{0}}\Big[\frac{{C}_{61}(2-2d){C'}_{41}(2-2d+\beta)}{{C}_{41}(2-2d){C}_{41}(2-2d)}\hspace{-1mm}-\hspace{-1mm}\frac{{C'}_{61}(2-2d+\beta)}{{C}_{41}(2-2d)}\Big]m^{-\beta}\hspace{-1mm}+\hspace{-1mm}O\big(m^{-\beta-\epsilon}+m^{-2d-1}\log(m)\big).
\end{eqnarray*}
As a consequence,
\begin{multline}\label{Ctild3}
\frac{R_{m}}{V_{m}^{2}}=\rho(d)~+~{C}_{3}(2-2d,\beta)~~m^{-\beta}+~O\Big(m^{-\beta-\epsilon}+m^{-2d-1}\log(m)+m^{-2\beta} \Big)\quad (m\to \infty),\quad \mbox{and} \\
{C}_{3}(2-2d,\beta):=2 \, \frac{c_{1}}{c_{0}}\frac 1 {{C}^2_{41}(2-2d)}\big[{C}_{61}(2-2d){C'}_{41}(2-2d+\beta)-{C'}_{61}(2-2d+\beta)
{C}_{41}(2-2d)\big],
\end{multline}
and numerical experiments proves that $ {C}_{3}(2-2d,\beta)/c_1$ is negative for any $d \in [0.5,1.5)$ and $\beta>0$. \\
~\\
4. Let $\beta=2d+1$. Then,
Once again with Lemma \ref{lemma46}:
\begin{eqnarray*}
\frac{R_{m}}{V_{m}^{2}}& \hspace{-3mm}=&\hspace{-3mm} 1\hspace{-1mm} -\hspace{-1mm}2\, \frac{
{C}_{61}(2-2d)~m^{1+2d}\hspace{-1mm}+\hspace{-1mm}O\big(m^{2d-1}\big)+\hspace{-1mm}\frac{c_{1}}{c_{0}}{C'}_{62}(3)\log(m)\hspace{-1mm}+\hspace{-1mm}O\big(1\big)}
{{C}_{41}(2-2d)m^{1+2d}\hspace{-1mm}+\hspace{-1mm}O\big(m^{2d-1}\big)+\hspace{-1mm}\frac{c_{1}}{c_{0}} {C'}_{42}(3)\log(m)\hspace{-1mm}+\hspace{-1mm}O\big(1\big)}\\
&\hspace{-3mm}=&\hspace{-3mm}1\hspace{-1mm}-\hspace{-1mm}\frac{2}{{C}_{41}(2-2d)}\Big[{C}_{61}(2-2d)\hspace{-1mm}+\hspace{-1mm}\frac{c_{1}}{c_{0}}{C'}_{62}(3)m^{-\beta}\log(m)\Big]\Big[1\hspace{-1mm}-\hspace{-1mm}\frac{c_{1}}{c_{0}}\frac{{C'}_{42}(3)}{{C}_{41}(2-2d)}m^{-\beta}\log(m)\Big]\hspace{-1mm}+\hspace{-1mm}O\big(m^{-2}+m^{-2d-1}\big)\\
&\hspace{-3mm}=&\hspace{-3mm}1\hspace{-1mm}-\hspace{-1mm}\frac{2{C}_{61}(2-2d)}{{C}_{41}(2-2d)}\hspace{-1mm}+\hspace{-1mm}2\frac{c_{1}}{c_{0}}\Big[\frac{{C}_{61}(2-2d){C'}_{42}(3)}{{C}_{41}(2-2d){C}_{41}(2-2d)}\hspace{-1mm}-\hspace{-1mm}\frac{{C'}_{62}(3)}{{C}_{41}(2-2d)}\Big]m^{-\beta}\log(m)\hspace{-1mm}+\hspace{-1mm}O\big(m^{-2}\big).
\end{eqnarray*}
As a consequence,
\begin{eqnarray}\label{C0C1}
\frac{R_{m}}{V_{m}^{2}}=\rho(d)~+~O\big(m^{-2d-1}~\log(m)+m^{-2} \big)\quad (m\to \infty),\quad \mbox{with $2<\beta=2d+1<4$.}
\end{eqnarray}
{\bf Step 2:} A Taylor expansion of $\Lambda(\cdot)$ around $\rho(d)$ provides:
\begin{eqnarray*}
\Lambda\Big(\frac{R_{m}}{V_{m}^{2}}\Big) \simeq \Lambda\big(\rho(d)\big)+\Big[\frac{\partial\Lambda}{\partial\rho}\Big](\rho(d))\Big(\frac{R_{m}}{V_{m}^{2}}-\rho(d)\Big)+\frac{1}{2} \, \Big[\frac{\partial^{2}\Lambda}{\partial\rho^{2}}\Big](\rho(d))\Big(\frac{R_{m}}{V_{m}^{2}}-\rho(d)\Big)^2.
\end{eqnarray*}
Note that numerical experiments show that $\displaystyle \Big[\frac{\partial\Lambda}{\partial \rho}\Big](\rho)>0.2$ for any $\rho\in (-1,1)$.
As a consequence, using the previous expansions of $R_{m}/V_{m}^{2}$ obtained in Step 1 and since $\E \big [IR_N(m)\big ]=\Lambda\big(R_{m}/V_{m}^{2}\big)$, then for all $0<\beta\leq 2$:
\begin{eqnarray*}
\E \big [IR_N(m)\big ]=\Lambda_{0}(d)+\left \{ \begin{array}{ll} c_1 \, {C'}_1(d,\beta)\,m^{-\beta}+O\big (m^{-2} +m^{-2\beta} \big )& \mbox{if}~~~\beta<2d-1 \\
 c_1 \, {C'}_2(d,\beta)\,m^{-\beta}+O\big (m^{-2}+m^{-2-\beta}\log m +m^{-2\beta} \big )& \mbox{if}~~~\beta=2d-1 \\
  c_1 \,{C'}_3(d,\beta)\,m^{-\beta}+O\big (m^{-\beta-\epsilon}+m^{-2d-1}\log m +m^{-2\beta} \big )& \mbox{if}~~~2d-1<\beta<2d+1 \\
O\big(m^{-2d-1}\log m+m^{-2}\big)& \mbox{if}~~~\beta=1+2d \\

\end{array}\right .
\end{eqnarray*}
with $C'_\ell(d,\beta)=\Big [\frac{\partial\Lambda}{\partial\rho}\Big](\rho(d)) \, C_\ell(2-2d,\beta)$ for $\ell=1,2,3$ and $ C_\ell$ defined in (\ref{Ctild1}), (\ref{Ctild2}) and (\ref{Ctild3}).
\end{proof}

\begin{proof}[Proof of Theorem \ref{cltnada}] Using Property \ref{devEIR}, if $m \simeq C\, N^\alpha$ with $C>0$ and $(1+2\beta)^{-1}  <\alpha<1$ then $\sqrt{N/m}\, \big (\E \big [IR_N(m)\big ] -\Lambda_{0}(d)\big ) \limiteN 0$ and it implies that the multidimensional CLT (\ref{TLC1}) can be replaced by
\begin{eqnarray}\label{TLC1bis}
\sqrt{\frac{N}{m}}\Big (IR_N(m_j)-\Lambda_{0}(d)\Big )_{1\leq j \leq p}\limiteloiN {\cal N}(0, \Gamma_p(d)).
\end{eqnarray}
It remains to apply the Delta-method with the function $\Lambda_0^{-1}$ to CLT (\ref{TLC1bis}). This is possible since the function $d \to \Lambda_0(d)$ is an increasing function such that $\Lambda_0'(d)>0$ and $\big (\Lambda_0^{-1})'(\Lambda_0(d))=1/\Lambda'_0(d)>0$ for all $d\in (-0.5,1.5)$. It achieves the proof of Theorem \ref{cltnada}.
\end{proof}

\begin{proof}[Proof of Proposition \ref{hatalpha}]
We use the proof of Proposition 2 in Bardet and Dola (2012). Indeed, this proof is only based on the definitions of $\widehat d_N(jN^\alpha)$, $\widetilde d_N(N^\alpha)$, $\widehat Q_N(\alpha)$ and $\widehat \alpha_N$ which are exactly the same here, and on the CLT satisfied by $\big(\widehat d_N(jN^\alpha)\big )_{1\leq j \leq p}$ The only difference is that we suppose here $0<\beta\leq 2$ and $0.5 \leq d < 1.25$ instead of $0<\beta$ and $-0.5< d <0.5$ in Bardet and Dola (2012). Then we consider only the case where $\beta \leq 2d+1$ and $\alpha^*=\frac 1 {(1+2\beta)\wedge (4d +3)}=\frac 1 {(1+2\beta)}$ in our framework.
\end{proof}

\begin{proof}[Proof of Theorem \ref{tildeD}]
This proof is exactly the same as the proof of Theorem 2 in  Bardet and Dola (2012).
\end{proof}

\medskip
\noindent
{\bf Acknowledgments.} The authors are grateful to both the referees for their very careful reading and many relevant
suggestions and corrections that strongly improve the content and the form of the paper.

\bibliographystyle{amsalpha}

\end{document}